\documentclass[11pt,a4paper]{article}


\usepackage[left=2.45cm, top=2.45cm,bottom=2.45cm,right=2.45cm]{geometry}

\usepackage{amsmath,amssymb,amsthm,mathrsfs,calc,graphicx,stmaryrd,xcolor,enumerate,cleveref,dsfont}
\usepackage[british]{babel}
\usepackage{amsfonts}              

\newtheoremstyle{definition}
{10pt}
{10pt}
{}
{}
{\bfseries}
{}
{.5em}
{}

\newtheoremstyle{plain}
{10pt}
{10pt}
{\itshape}
{}
{\bfseries}
{}
{.5em}
{}

\theoremstyle{plain}	
\newtheorem{Theorem}{Theorem}[section]
\newtheorem{Lemma}[Theorem]{Lemma}
\newtheorem{Proposition}[Theorem]{Proposition}

\theoremstyle{definition}	
\newtheorem{Definition}[Theorem]{Definition}
\newtheorem{Remark}[Theorem]{Remark}

\newcommand{\R}{\mathbb{R}}
\newcommand{\N}{\mathbb{N}}
\newcommand{\Prob}{\mathbb{P}}
\newcommand{\E}{\mathbb{E}}
\newcommand{\Sph}{\mathbb{S}}

\newcommand{\diff}{\textup{d}}

\DeclareMathOperator{\conv}{conv}

\DeclareMathOperator{\Vol}{Vol}

\DeclareMathOperator{\Lip}{Lip}
\DeclareMathOperator{\dVar}{V}

\DeclareMathOperator{\Per}{Per}
\DeclareMathOperator{\dive}{div}

\allowdisplaybreaks

\begin{document}
	
	\title{\bfseries Moments of the maximal number of empty simplices of a random point set}
	
	\author{Daniel Temesvari\footnotemark[1]}
	
	\date{}
	\renewcommand{\thefootnote}{\fnsymbol{footnote}}
	\footnotetext[1]{Ruhr University Bochum, Faculty of Mathematics, D-44780 Bochum, Germany. E-mail: daniel.temesvari@rub.de}

	\maketitle
	
	\begin{abstract}
		For a finite set $X$ of $n$ points from $ \R^M$, the degree of an $M$-element subset $\{x_1,\dots,x_M\}$ of $X$ is defined as the number of $M$-simplices that can be constructed from this $M$-element subset using an additional point $z \in X$, such that no further point of $X$ lies in the interior of this $M$-simplex.
		Furthermore, the degree of $X$, denoted by $\deg  (X)$, is the maximal degree of any of its $M$-element subsets.
		
		The purpose of this paper is to show that the moments of the degree of $X$ satisfy $\E[ \deg (X)^k ] \geq c n^k/\log n$, for some constant $c>0$, if the elements of the set $X$ are chosen uniformly and independently from a convex body $W \subset \R^M$. Additionally, it will be shown that $\deg (X)$ converges in probability to infinity as the number of points of the set $X$ goes to infinity.
		\bigskip
		\\
		{\bf Keywords}. {Random point set in $\R^M$, empty simplex, covariogram, stochastic geometry}\\
		{\bf MSC}. Primary 52A05; Secondary 52B05, 60D05.
	\end{abstract}
	
	\section{Introduction}
	
	Let $M\in \N$, with $M\geq 2$, and consider a finite set $X$ of points from $\R^M$ in general position, i.e., no $M+1$ points of $X$ lie in an affine hyperplane of $\R^M$. For an $M$-element subset $\{x_1,\dots,x_M\}$ of $X$, one says that the $M$-simplex $\conv(\{x_1,\dots,x_M,z\})$, $z \in X\setminus \{x_1,\dots,x_M\}$, is empty, if there lies no further point of $X$ in the convex hull of the points $x_1,\dots,x_M,z$, i.e., if $\conv(\{x_1,\dots,x_M,z\})\cap X = \{x_1,\dots,x_M,z\}$. The number of empty simplices that can be formed from an $M$-element subset $\{x_1,\dots,x_M\}\subset X$ is called the degree of $\{x_1,\dots,x_M\}$ and is denoted by $\deg(x_1,\dots,x_M)$. The degree of the set $X$, written as $\deg(X)$, is introduced as the maximum of the degrees $\deg(x_1,\dots,x_M)$ over all $M$-element subsets $\{x_1,\dots,x_M\}\subset X$.
	
	This paper is concerned with the question, whether the degree of a finite set $X\subset \R^M$, $|X|=n$, tends to infinity as $n$ goes to infinity, or, in a more descriptive language, if the maximal number of empty simplices one can obtain for any of the $M$-element subsets of $X$ grows proportionally to the number of points of the set $X$.
	
	This problem dates back to the early nineties of the last millennium as P. Erd\H{o}s \cite{E} raised the question, whether, in the $2$-dimensional case, the degree of such a set $X$ goes to infinity as the number of points of $X$ goes to infinity, i.e., if the maximal number of empty triangles for any pair of points of the set $X$ in the plane goes to infinity as the number of points of the set $X$ go to infinity.
	It was then conjectured by I. B\'{a}r\'{a}ny that this is indeed true.
	This conjecture was later repeated in \cite{BK}, as well as in \cite{BMP}. Although I. B\'{a}r\'{a}ny and Gy. K\'{a}rolyi \cite{BK} showed that $\deg(X)\geq 10$ for sufficiently large $n$ and I. B\'{a}r\'{a}ny and P. Valtr \cite{BV} constructed a set $X$ in general position such that $\deg(X)=4\sqrt{n}(1+o(1))$, it is still unknown whether this conjecture is true.
	
	However, I. B\'{a}r\'{a}ny, J.F. Marckert and M. Reitzner showed in \cite{BMR}, that $\E[\deg(X)] \geq cn/\log n$, for some constant $c$, if the points of the set $X$ are chosen uniformly and independently from a convex body $W\subset \R^2$, i.e., a compact, convex set with nonempty interior. Furthermore, they also showed that in this setting the degree of $X$ converges in probability to infinity as $n$ goes to infinity.
	
	The purpose of this article is to prove the following generalization:
	
	\begin{Theorem}\label{t8}
		Let $M \in \N$, with $M\geq2$, and $X \subset \R^M$ be a set of $n$ points, $n$ sufficiently large, chosen uniformly and independently from a convex body $W \subset \R^M$. Then, for all $k \in \N$,
		\begin{enumerate}[(i)]
			\item
				$\E[\deg(X)^k] \geq cn^k/\log n$, for some positive constant $c>0$, and
			\item
			   $ \deg(X) \rightarrow \infty$ in probability, as $n\rightarrow \infty$.
		\end{enumerate}
	\end{Theorem}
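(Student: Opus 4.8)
The plan is to derive both parts from a single construction: a very small ``base'' simplex around which almost all remaining points span empty simplices. Since $\deg(X)\ge\deg(S)$ for every $M$-element subset $S\subseteq X$, it suffices to exhibit a random subset $S=S(X)$ whose degree is large. I would fix an interior point $p$ of $W$ and a scale $\delta=\delta(n)$, and let the favourable event $A$ be that some $M$ points of $X$ fall into a ball of radius $\delta$ about $p$ and span a non-degenerate (``fat'') $(M-1)$-simplex $B$; conditionally on $A$ this $B$ is the base. The geometric heart of the matter is that for such a tiny base and any apex $z\in X$, the simplex $\conv(B\cup\{z\})$ is a thin cone of $M$-dimensional volume $\Theta\!\big(\delta^{M-1}\,\mathrm{dist}(z,\mathrm{aff}\,B)\big)$, and it is empty precisely when no other point of $X$ lies in it. Hence $\deg(B)$ equals the number of points of $X$ that are radially visible from $B$.

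First I would carry out the first-moment computation. Conditioning on $B$ and letting a further point range uniformly over $W$, the probability that it avoids the cone to a given apex $z$ is $\big(1-\Vol(\conv(B\cup\{z\}))/\Vol(W)\big)^{\,n-M-1}$; summing over the roughly $n$ candidate apexes and passing to the limit turns this into an integral of covariogram type (with $g_W(t)=\Vol(W\cap(W+t))$ governing the normalisation once the base centre is also averaged over $W$, exactly as in the planar case of B\'ar\'any, Marckert and Reitzner),
\begin{equation*}
\E\big[\deg(B)\,\big|\,A\big]\;\asymp\;\frac{n}{\Vol(W)}\int_W \exp\!\Big(-c\,n\,\delta^{M-1}\,\mathrm{dist}(z,\mathrm{aff}\,B)/\Vol(W)\Big)\,\diff z ,
\end{equation*}
which is of order $n$ as soon as $\delta\lesssim\delta_\ast:=n^{-1/(M-1)}$. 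Read for the complementary event, the same estimate bounds the expected number of \emph{blocked} apexes by $O\big(n^2\,\delta^{M-1}\,\mathrm{diam}(W)/\Vol(W)\big)$, so that taking $\delta=\delta_\ast/T$ with $T\to\infty$ makes the expected number of blocked apexes $o(n)$; by Markov's inequality the realized base then satisfies $\deg(B)\ge n/2$ with conditional probability $1-o(1)$.

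The remaining, and genuinely delicate, point is to balance this against the probability of the favourable event. The expected number of $M$-tuples of diameter at most $\delta$ is $\asymp n^M\delta^{M(M-1)}$, hence $\Theta(1)$ at $\delta_\ast$; pushing $\delta$ below $\delta_\ast$ by the factor $T=(\log n)^{1/(M(M-1))}$ needed above drives $\Prob[A]$ down to order $1/\log n$ while keeping $T^{M-1}=(\log n)^{1/M}\to\infty$, so that the blocked-apex bound remains $o(n)$. This tension --- a base small enough that a crude first-moment bound already forces almost every apex to be visible, versus a base large enough to exist with decent probability --- is exactly where the logarithm is lost, and I expect it to be the main obstacle; controlling it rigorously for an arbitrary convex body $W$ (with its boundary effects and the requirement that the cluster be fat rather than flat) is the technical crux, and is where the passage from $M=2$ to general $M$ demands the most care. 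Granting it, part~(i) follows from the conditional Jensen inequality $\E[Y^k\mathds{1}_A]\ge(\E[Y\mathds{1}_A])^k/\Prob[A]^{k-1}$ with $Y=\deg(B)$: since $\E[\deg(B)\mathds{1}_A]\gtrsim n\,\Prob[A]$ and $\Prob[A]\asymp1/\log n$, one gets $\E[\deg(X)^k]\ge\E[\deg(B)^k\mathds{1}_A]\ge c\,n^k/\log n$.

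For part~(ii) the single-base argument is too weak, because $\Prob[A]\to0$. Instead I would tile a fixed interior region of $W$ by $m=m(n)\to\infty$ disjoint balls of radius $\delta$ and run the construction independently in each, now choosing $\delta$ \emph{just above} the critical scale, for instance $\delta=\delta_\ast(\log n)^{1/(M(M-1))}$. At this scale the expected number of fat small bases is $\asymp\log n\to\infty$, while the conditional degree of each is of order $n/\log n\to\infty$; a second-moment argument then shows that with probability tending to one at least one such base occurs, and the near-independence of the visibility indicators across the disjoint thin cones of that base makes its degree exceed any fixed level $L$ with probability tending to one. Hence $\deg(X)\to\infty$ in probability, which is the assertion of~(ii).
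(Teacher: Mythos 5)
Your proposal lands on the correct critical scale $\delta_\ast=n^{-1/(M-1)}$ and the correct first-moment facts: an $M$-tuple of diameter $\lesssim\delta_\ast$ has conditional expected degree of order $n$, and the expected number of such tuples is of constant order. These are exactly the contents of \Cref{t9} and \Cref{t10} of the paper (the latter proved there via a generalized covariogram). But two steps that carry the real weight are asserted rather than proved. First, your event $A$ is defined relative to a ball of radius $\delta$ about a \emph{fixed} interior point $p$; the probability that $M$ of the $n$ points land in such a fixed ball is of order $n^M\delta^{M^2}$, which at $\delta\approx\delta_\ast$ is polynomially small in $n$, not of order $1/\log n$. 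Your later count $n^M\delta^{M(M-1)}$ is the expected number of small-diameter tuples \emph{anywhere} in $W$, so $A$ must be redefined as the existence of such a tuple somewhere in the inner part of $W$; and with that reading the lower bound $\Prob[A]\gtrsim 1/\log n$ that your conditional Jensen step requires is not free --- it needs a second (factorial) moment bound on the tuple count, i.e.\ a Paley--Zygmund argument, which you do not supply.

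Second, and more seriously, the inequality $\E[\deg(B)\mathds{1}_A]\gtrsim n\,\Prob[A]$ is precisely where the difficulty lives. Once $B$ is a cluster \emph{selected from the configuration}, the conditional law of the remaining $n-M$ points given $A$ and the location of $B$ is no longer uniform, and when several clusters coexist one must decide which is ``the'' base. The paper resolves this by working with $F_T^{(k)}(\xi_n)\le N_T(\xi_n)\deg(\xi_n)^k$, applying the binomial Mecke formula \eqref{e7} to reduce to \emph{fixed} points $x_1,\dots,x_M$ together with an independent uniform sample, truncating at $N_T\le K_n=2(M+1)\log n$ (this truncation is where the paper's logarithm enters), and controlling $\Prob(N_T>K_n)$ by Markov's inequality plus the covariogram bound $\E[N_T]=\mathcal{O}(1)$. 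Your subcritical choice of $\delta$ is a conceivable alternative device --- it aims to make the cluster essentially unique so that the ratio $F_T^{(k)}/N_T$ degenerates to $F_T^{(k)}$ --- but it still has to be implemented through such a Palm-type identity together with a second moment on the cluster count; as written it is a heuristic, and the fact that your argument would give $n^k/h(n)$ for \emph{any} $h(n)\to\infty$ (nothing forces $h=\log n$) is a symptom that the loss is not being accounted for where it actually occurs. For part (ii), knowing that w.h.p.\ there exists a cluster whose conditional \emph{expected} degree tends to infinity does not give $\deg(X)\to\infty$ in probability without an anti-concentration statement for the degree of that cluster; the paper instead adapts the grid/Poissonization argument of B\'ar\'any--Marckert--Reitzner with mesh width $n^{-1/M}$, which supplies exactly that missing ingredient.
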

	This theorem generalizes the results from \cite{BMR} in two ways: First, it shows that the lower bound for $\E[\deg(X)]$ holds true in arbitrary dimensions, and, second, it also gives similar lower bounds to the higher moments of $\deg(X)$, i.e., that for any dimension $M$, natural number $k\in \N$ and large enough set $X\subset \R^M$ of random points, chosen uniformly and independently from a convex body $W$, the lower bound $\E[\deg(X)^k]\geq c n^k / \log n$, for a constant $c>0$, holds for the $k$-th moment of the degree of $X$. Furthermore, it shows the convergence in probability of the degree of $X$ to infinity as the number of points grows to infinity. 
	
	Note that a lower bound for the moments comes up quite naturally by simply applying Jensen's inequality to the expectation of the $k$-th power of the positive random variable $\deg(X)$, which would yield $\E[\deg(X)^k]\geq cn^k / (\log n)^k$. However, the result of \Cref{t8} provides a compelling improvement of this trivial bound.

	We would like to highlight two significant differences of the proof of the main theorem from \cite{BMR} and the proof presented in this paper. The authors of \cite{BMR} make use of two results from \cite{RST}, namely, Theorem 3.1 and Theorem 6.6, which both are elaborated in arbitrary dimension, but only for pairs of points instead of $M$-element subsets. Although both of the theorems can be generalized for our setting, it turns out that the generalization of Theorem 6.6. is not powerful enough to be used in the main proof.
	
	Theorem 3.1 from \cite{RST} relies on conclusions from \cite{G} on the covariogram of a set, relating finiteness of the perimeter of a set to the Lipschitz continuity of the covariogram of the set, as well as the existence of all the directional right derivatives in the point zero for the covariogram of the set.  We generalize this result for $M$-element subsets by generalizing the notion of the covariogram of a set. Although we cannot obtain a full equivalent of the results from \cite{G} for the generalized covariogram, it still suffices for our purposes (see \Cref{t4} and \Cref{t6}).
	
	The second result used from \cite{RST}, namely, Theorem 6.6, is a certain concentration inequality, which can also be generalized for $M$-element subsets, but, as already mentioned, this generalization loses its power on this way. In particular, Theorem 6.6. yields a polynomially decaying upper bound to the probability of a certain functional $N_T(X)$, defined in the preliminaries, being bigger than some $\beta\in \R^+$. It heavily relies on the fact that the number of pairs of points of $X$ grows asymptotically like $n^2$. However, it turns out that using Markov's inequality as a substitute suffices and even simplifies the already existing proof for the expectation in the $2$-dimensional case.
	
	\section{Preliminaries}

	\subsection{General preliminaries}

	First, the basic definitions are restated once more. Let $M\in \N$, with $M\geq 2$, and consider $X$ to be a finite set of points in $\R^M$ in general position, i.e., no $M+1$ points of $X$ lie in an affine hyperplane of $\R^M$. By ${X \brack k}$ we denote the set of all $k$-element subsets of $X$.
	An $M$-simplex $\conv(\{x_1,\dots,x_{M+1}\})$ with $\{x_1,\dots,x_{M+1}\} \in {X \brack M+1}$ is said to be empty (in $X$) if $\conv(\{x_1,\dots,x_{M+1}\})\cap X = \{x_1,\dots,x_{M+1}\}$.
	The degree of $\{x_1,\dots,x_M\} \in {X \brack M}$ is the number of points $y \in X$ such that $\{x_1,\dots,x_M,y\} \in {X \brack M+1}$ determines an empty simplex and is denoted by $\deg(x_1,\dots,x_M)$ or $\deg(x_1,\dots,x_M;X)$, if an emphasis is put on the set $X$. We set
	\[ \deg (X) := \max \left\{ \deg(x_1,\dots,x_M): \{x_1,\dots,x_M\} \in {X \brack M} \right\} \mbox{.} \]
	It immediately follows that $\deg(X)\leq n-M$.

	By $\Vol_M(A)$ we mean the $M$-dimensional Lebesgue volume of a set $A$, and, specifically, by $\Vol_M(x_1,\dots,x_{M+1})$ we mean the $M$-dimensional Lebesgue volume of the $M$-simplex spanned by $x_1,\dots,x_{M+1}$.
	The symbol $\mathbb{B}^M_T(x)$ will be used to denote the $M$-dimensional Euclidean ball with radius $T>0$ centered at $x \in \R^M$. The unit ball $\mathbb{B}^M_1(0)$ will be referred to as $\mathbb{B}^M$ and its unit sphere as $\Sph^{M-1}$. The $M$-dimensional Lebesgue volume of the unit ball will be abbreviated as $\kappa_{M}:=\Vol_M\left(\mathbb{B}^M\right)$, while for the $(M-1)$-dimensional normalized Hausdorff measure of the unit sphere the symbol $\omega_{M}:=\mathcal{H}^{M-1}\left(\Sph^{M-1}\right)$ is used. Note that $\omega_{M}=M\kappa_M$.
	
	Additionally, we define for $T>0$ and $\mathcal{M}=\{1,\dots,M\}$ the functionals
	\[ N_T(X) := \sum_{ \{x_1,\dots,x_M\} \in {X \brack M} } \mathds{1} \left( \exists i \in \mathcal{M}: \{x_1,\dots,x_M\} \subset \mathbb{B}^M_{T}\left(x_i\right) \right)  \]
	and
	\[
	F^{(k)}_T(X) := \sum_{\{x_1,\dots,x_M\} \in {X \brack M}} \mathds{1} \left( \exists i \in \mathcal{M}: \{x_1,\dots,x_M\} \subset \mathbb{B}^M_{T}\left(x_i\right) \right) \deg(x_1,\dots,x_M;X)^k
	\]
	for all $k \in \N$.
	
	The purpose of the first functional $N_T(X)$ is to count the number of $M$-element subsets $\{x_1,\dots,x_M\} \subset X$ for which an $x_i$, $i \in \mathcal{M}$, can be found, such that all the other points from that subset are not further than $T$ away from $x_i$. The functionals $F^{(k)}_T(X)$ do the same as $N_T(X)$, but, additionally, weight each of the counted subsets with the $k$-th power of its degree. These functionals will lie at the core of the proof of \Cref{t8}.
	
	From now on $\xi_n$ will be used to denote a set of $n$ random points chosen uniformly and independently from a convex body $W\subset\R^M$, i.e., a  compact convex set with nonempty interior. Note, that with probability one, such a set $\xi_n$ will be in general position.
	
	An essential tool that will be needed is the binomial counterpart of Mecke's formula for Poisson processes. For a fixed integer $k\geq 1$ and a non-negative measurable function $f:W^k \rightarrow \R$ this reads as
	
	\begin{equation}\label{e7}
	\E \sum_{\{x_1,\dots,x_k\}\in {\xi_n \brack k}} f\left(x_1,\dots,x_k\right) = \binom{n}{k} \int \limits_{W} \dots\int \limits_{W} f\left(x_1,\dots,x_k\right) \diff x_1\dots\diff x_k \mbox{.}
	\end{equation}

	Moreover, the Landau notation is used. Let $f,g:\R\rightarrow\R$ and $a \in \R$. Then, $g=o(h)$ as $t\to a$, if $\lim_{t\to a} |g(t)/h(t)|=0$ and $g=\mathcal{O}(h)$ as $t\to a$, if $\limsup_{t\to a} |g(t)/h(t)|<\infty$.
	Throughout this paper constants will be denoted by $c$ and may vary from instance to instance.
		
	\subsection{Preliminaries concerning the covariogram}
	
	The covariogram of a Lebesgue measurable set $W\subset \R^M$ is defined as the mapping
	\[ g_W: \R^M \rightarrow [0,\infty) \mbox{,}~ y \mapsto \Vol_M(W\cap(y+W)) \mbox{.} \]
	In order to be able to derive our main result, a generalized notion of the covariogram of a Lebesgue measurable set $W\subset \R^M$ has to be considered.
	
	\begin{Definition}\label{t1}
		Let $W \in (\R^M)^{M-1}$ be a Lebesgue measurable set of finite Lebesgue measure. Define the generalized covariogram of $W$ as $g_W: (\R^M)^{M-1} \rightarrow [0,\infty)$, for all $y=(y_1,\dots,y_{M-1}) \in (\R^M)^{M-1}$, by
		\[ g_W(y)=\Vol_M \left(W \cap (y_1 + W) \cap \dots \cap (y_{M-1} + W) \right) \mbox{.} \]
	\end{Definition}
	
	\begin{Remark}\label{r2}
		Two things are worth to be noted here. First, the generalized covariogram can be written in terms of an integral over indicator functions of the set $W$, i.e.,
		\begin{align*}
		g_W(y)&=\Vol_M \left(W \cap (y_1 + W) \cap \dots \cap (y_{M-1} + W) \right) \\
		&= \int \limits_{\R^M} \mathds{1} (x \in W, x-y_1 \in W,\dots, x-y_{M-1} \in W)\, \diff x \\
		&= \int \limits_{\R^M} \mathds{1}_W(x)\prod_{i=1}^{M-1} \mathds{1}_W(x-y_i)\, \diff x
		\mbox{,}
		\end{align*}
		and, second, it is symmetric with respect to permutations of the vectors $y_1,\dots,y_{M-1}$, i.e.,
		\begin{equation}\label{eq:perm}
		g_W(y_1,\dots,y_{M-1}) = g_W\left(y_{\sigma(1)},\dots,y_{\sigma(M-1)}\right)
		\end{equation}
		holds for all permutations $\sigma \in \mathcal{S}_{M-1}$.
	\end{Remark}

	Let $U \subseteq \R^M$. Denote by $\mathcal{C}_c^1(U,\R)$ and $\mathcal{C}_c^1(U,\R^n)$ the sets of continuously differentiable functions from $U$ to $\R$ and $U$ to $\R^n$, respectively. Let $L_{loc}^1(U)$ be the sets of locally integrable functions over $U$ and $L^1(U)$ the set of integrable functions over $U$.\\	
	For a function $f \in L_{loc}^1(U)$ the variation in $U$ is defined as
	\[ V(f,U):=\sup \left\lbrace \int_U f(x) \dive \varphi(x) \diff x : \varphi \in \mathcal{C}_c^1\left(U,\R^M\right), \| \varphi \|_\infty \leq 1 \right\rbrace \mbox{,} \]
	while the directional variation in $U$ in the direction $u \in \Sph^{M-1}$ is defined as
	\[ V_u(f,U):=\sup \left\lbrace \int_U f(x) \frac{\partial \varphi}{\partial u} (x) \diff x : \varphi \in \mathcal{C}_c^1\left(U,\R\right), \| \varphi \|_\infty \leq 1 \right\rbrace \mbox{.} \]
	The perimeter of a set $W\subset\R^M$ in $U$ is introduced as $\Per(W,U):=V(\mathds{1}_W,U)$, i.e., the variation of the indicator function of $W$ in $U$. In the case that $U=\R^M$, one writes $\Per(W):=\Per(W,\R^M)$. Note that, if $W$ is convex, then $\Per(W)=\mathcal{H}^{M-1}(\partial W)$ holds. Analogously, the directional variation of the set $W$ in the direction $u \in \Sph^{M-1}$ is defined as $V_u(W,U):=V_u(\mathds{1}_W,U)$. Again, in the case that $U=\R^M$ one, writes $V_u(W):=V_u(W,\R^M)$. For further information on these topics see \cite{G} and the textbook of Ambrosio, Fusco and Pallara \cite{ACV}. 
	The following two propositions, which will be used in the next section, are taken from \cite{G}, where one may also find their proofs.
	The first proposition shows that bounded variation of a function $f$ in $U$ is equivalent to $f$ having bounded directional variation in $U$ for each direction. Additionally, it provides a way to calculate the variation by means of integration over all directional variations.  
	\begin{Proposition}\label{p1}
		Let $U\subseteq \R^M$ be open and let $f \in L^1(U)$. Then, the following statements are equivalent:
		\begin{enumerate}[(i)]
			\item
				$V(f,U)<\infty$,
			\item
				$V_u(f,U)<\infty$ for all $u \in \Sph^{M-1}$,
			\item
				$V_{e_i}(f,U)<\infty$ for all vectors $e_i$ of the canonical basis of $\R^M$.
		\end{enumerate}
		Additionally,
		\[ \frac{1}{M} V(f,U) \leq \frac{1}{M} \sum_{i=1}^{M} V_{e_i}(f,U) \leq \sup_{u \in \Sph^{M-1}} V_u(f,U) \leq V(f,U) \]
		and
		\[ V(f,U) = \frac{1}{2 \kappa_{M-1}} \int_{\Sph^{M-1}} V_u(f,U) \mathcal{H}^{M-1}(\diff u) \]
		holds.
	\end{Proposition}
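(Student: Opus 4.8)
The plan is to handle the four-term chain of inequalities first, since it yields the equivalence of (i)--(iii) for free, and then to establish the integral formula by passing to the measure-theoretic description of $f$.

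First I would prove the chain by elementary manipulations of test objects. For the rightmost bound, fix $u\in\Sph^{M-1}$ and $\varphi\in\mathcal{C}_c^1(U,\R)$ with $\|\varphi\|_\infty\le1$, and insert the vector field $\psi:=u\,\varphi\in\mathcal{C}_c^1(U,\R^M)$ into the definition of $V(f,U)$; since $\dive\psi=\partial\varphi/\partial u$ and $\|\psi\|_\infty=\|\varphi\|_\infty\le1$, one obtains $\int_U f\,(\partial\varphi/\partial u)\,\diff x\le V(f,U)$, and taking suprema over $\varphi$ and then over $u$ gives $\sup_u V_u(f,U)\le V(f,U)$. The middle inequality is immediate, as each $e_i$ lies on $\Sph^{M-1}$. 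For the leftmost bound, take $\varphi=(\varphi_1,\dots,\varphi_M)\in\mathcal{C}_c^1(U,\R^M)$ with $\|\varphi\|_\infty\le1$, write $\int_U f\,\dive\varphi=\sum_{i=1}^M\int_U f\,\partial_i\varphi_i$, note $\|\varphi_i\|_\infty\le1$ and $\partial_i\varphi_i=\partial\varphi_i/\partial e_i$, so each summand is bounded by $V_{e_i}(f,U)$; the supremum over $\varphi$ then gives $V(f,U)\le\sum_{i=1}^M V_{e_i}(f,U)$. Reading the chain off yields (i)$\Rightarrow$(ii)$\Rightarrow$(iii)$\Rightarrow$(i).

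For the integral formula I would first dispose of the degenerate case. Viewed for $u\in\R^M$ through the same formula, $u\mapsto V_u(f,U)$ is a supremum of the linear maps $u\mapsto\int_U f\,(\partial\varphi/\partial u)\,\diff x$, hence convex and positively $1$-homogeneous, and it is even; consequently its finiteness set is a symmetric convex cone, i.e.\ a linear subspace. Thus if $V(f,U)=\infty$, then by the equivalence just proven $V_u(f,U)=\infty$ for some $u$, and therefore for $\mathcal{H}^{M-1}$-almost every $u$, so both sides of the identity are infinite. It then remains to treat $f\in\BV(U)$, where the Riesz representation theorem identifies the distributional gradient $Df$ with a finite $\R^M$-valued Radon measure satisfying $|Df|(U)=V(f,U)$; its polar decomposition $Df=\sigma\,|Df|$, with $\sigma:U\to\Sph^{M-1}$ the Radon--Nikodym direction, provides the working description.

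The heart of the argument is the pointwise-in-$u$ identity $V_u(f,U)=\int_U|u\cdot\sigma|\,\diff|Df|$. Integration by parts rewrites $\int_U f\,(\partial\varphi/\partial u)\,\diff x$ as $-\int_U\varphi\,\diff(u\cdot Df)$, and the total-variation duality for the scalar measure $u\cdot Df=(u\cdot\sigma)\,|Df|$ gives $V_u(f,U)=|u\cdot Df|(U)=\int_U|u\cdot\sigma|\,\diff|Df|$. Substituting this, applying Fubini--Tonelli, and invoking the rotational invariance of surface measure reduces everything to the constant $\int_{\Sph^{M-1}}|u\cdot v|\,\mathcal{H}^{M-1}(\diff u)$, which is independent of $v\in\Sph^{M-1}$ and equals $2\kappa_{M-1}$ (verified by taking $v=e_1$). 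This produces $\int_{\Sph^{M-1}}V_u(f,U)\,\mathcal{H}^{M-1}(\diff u)=2\kappa_{M-1}\,|Df|(U)=2\kappa_{M-1}\,V(f,U)$, as claimed. I expect the main obstacle to be the rigorous identification $V_u(f,U)=|u\cdot Df|(U)$, since it relies on the measure representation of $\BV$ functions and on matching the scalar total variation with the supremum defining $V_u$; once this is in place, the remaining steps are a routine application of Fubini and a standard spherical integral.
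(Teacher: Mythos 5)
The paper offers no proof of this proposition: it is stated as a quoted result, with the reader referred to Galerne \cite{G} for the argument. Your proof is correct and is essentially the standard one found there --- the chain of inequalities via the substitutions $\psi=u\varphi$ and $\dive\varphi=\sum_{i}\partial_i\varphi_i$, and the integral identity via the representation $V_u(f,U)=|u\cdot Df|(U)=\int_U|u\cdot\sigma|\,\diff|Df|$ combined with Fubini and the constant $\int_{\Sph^{M-1}}|u\cdot v|\,\mathcal{H}^{M-1}(\diff u)=2\kappa_{M-1}$. Your explicit treatment of the degenerate case $V(f,U)=\infty$ (finiteness set of $u\mapsto V_u(f,U)$ is a linear subspace, hence $\mathcal{H}^{M-1}$-null on the sphere when proper) is a detail usually left implicit, and is a welcome addition.
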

	
	The second proposition elaborates a method to calculate directional variations of a function $f$ in $U$ by integrals of difference quotients.
	
	\begin{Proposition}\label{p2}
		Let $u \in \Sph^{M-1}$ and $f \in L^1(\R^M)$. Then,
		\[ \int_{\R^M} \frac{|f(x+ru)-f(x)|}{|r|}\, \diff x \leq V_u(f) \mbox{,} \]
		for all $r\neq 0$, and
		\[ \lim_{r \rightarrow 0}  \int_{\R^M} \frac{|f(x+ru)-f(x)|}{|r|}\, \diff x = V_u(f) \mbox{.} \]
	\end{Proposition}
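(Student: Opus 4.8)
The plan is to prove the two claims separately: first the inequality $\int_{\R^M} |f(x+ru)-f(x)|/|r| \, \diff x \le V_u(f)$ for every $r \neq 0$, and then the lower bound $\liminf_{r\to 0}(\cdots) \ge V_u(f)$, which together with the inequality forces the limit to exist and equal $V_u(f)$. This structure also handles the case $V_u(f)=\infty$ automatically, since the inequality is then vacuous and the lower bound forces the limit to be $+\infty$.

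For the inequality I would exploit the duality $\|g\|_{L^1(\R^M)} = \sup\{\int_{\R^M} g\varphi\,\diff x : \varphi \in \mathcal{C}_c^1(\R^M,\R),\ \|\varphi\|_\infty \le 1\}$, applied to the difference quotient $g(x) = (f(x+ru)-f(x))/r$, which lies in $L^1(\R^M)$ because $f \in L^1(\R^M)$. Fixing such a test function $\varphi$, a translation substitution moves the increment from $f$ onto $\varphi$, giving $\int g\varphi\,\diff x = \int f(x)\,(\varphi(x-ru)-\varphi(x))/r\,\diff x$. Writing $\varphi(x-ru)-\varphi(x) = -\int_0^r \frac{\partial\varphi}{\partial u}(x-su)\,\diff s$ by the fundamental theorem of calculus along $u$, and applying Fubini (justified since $f\in L^1$ and $\partial\varphi/\partial u$ is bounded with compact support), I obtain an average over $s\in[0,r]$ of the integrals $\int f(x)\frac{\partial\varphi}{\partial u}(x-su)\,\diff x$. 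Each such integral is, after translating $\varphi$ by $su$ (which preserves both membership in $\mathcal{C}_c^1(\R^M,\R)$ and the bound $\|\varphi\|_\infty\le1$), dominated in absolute value by $V_u(f)$; averaging preserves the bound. Taking the supremum over $\varphi$ and invoking the duality identity yields the claim.

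For the lower bound I would run the same computation in reverse. For any admissible $\varphi$, the difference quotients $(\varphi(x+ru)-\varphi(x))/r$ converge \emph{uniformly} to $\frac{\partial\varphi}{\partial u}(x)$ as $r\to0$, because $\partial\varphi/\partial u$ is uniformly continuous and compactly supported. This lets me pass to the limit in $\int f(x)\,(\varphi(x+ru)-\varphi(x))/r\,\diff x$, whose value, by the same translation substitution, equals $\int (f(x-ru)-f(x))/r\,\varphi(x)\,\diff x$ and is hence bounded in modulus by $\int |f(x+ru)-f(x)|/|r|\,\diff x$. Letting $r\to0$ gives $\left|\int_{\R^M} f\,\frac{\partial\varphi}{\partial u}\,\diff x\right| \le \liminf_{r\to0}\int_{\R^M} |f(x+ru)-f(x)|/|r|\,\diff x$, and taking the supremum over $\varphi$ produces $V_u(f)\le\liminf$. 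Combined with the first part, this sandwiches the quantity as $\limsup \le V_u(f) \le \liminf$, so the limit exists and equals $V_u(f)$.

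The main obstacle I anticipate is the $L^1$--$(\mathcal{C}_c^1,\|\cdot\|_\infty)$ duality used to identify the supremum of $\int g\varphi\,\diff x$ with $\|g\|_{L^1}$: one must approximate $\operatorname{sgn}(g)$ by smooth, compactly supported functions bounded by $1$ in a manner compatible with the $L^1$ integrand, which is the one genuinely analytic (rather than purely formal) step. Everything else reduces to translation substitutions, the fundamental theorem of calculus in the direction $u$, and Fubini's theorem, all routine given $f\in L^1(\R^M)$ and the compact support and smoothness of the test functions.
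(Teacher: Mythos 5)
Your proposal is correct, but note that the paper does not prove this proposition at all: it imports it verbatim from Galerne \cite{G} (see the sentence preceding \Cref{p1}). Your argument -- the $L^1$--$(\mathcal{C}_c^1,\|\cdot\|_\infty)$ duality for the difference quotient, translation of the increment onto the test function, the fundamental theorem of calculus along $u$ to dominate by $V_u(f)$, and the reverse estimate via uniform convergence of the difference quotients of $\varphi$ to get $V_u(f)\le\liminf$ -- is precisely the standard proof found in that reference (and in Ambrosio--Fusco--Pallara \cite{ACV}), so there is nothing to object to.
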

	
	\section{Generalized covariogram}
	
	As already mentioned in the introduction, this section is based upon the results of \cite{G}, where it is shown that for a measurable set the three properties of having finite perimeter, Lipschitz continuous covariogram and existent directional right derivatives of the covariogram in zero, are equivalent. Furthermore, the perimeter can be computed as an integral of all these directional right derivatives over the unit sphere.
	A similar result, namely, \Cref{t6}, is provided here for the generalized covariogram.

	Two preliminary lemmas on basic properties of the generalized covariogram are necessary. The first one is concerned with an upper bound on the absolute value of the distance of two points of the covariogram, whereas the second one addresses the issue of finding a representation for this upper bound in terms of an integral of indicator functions.
	
	These lemmas will be used to show \Cref{t4} and \Cref{t6}. The first proposition provides statements for a measurable set in terms of certain directional derivatives of the covariogram and Lipschitz continuity of a certain restriction of the covariogram, which are equivalent to the property of the measurable set having finite directional variation in a direction $u \in \Sph^{M-1}$. Additionally, a formula for calculating this finite directional variation in a direction $u \in \Sph^{M-1}$ is given. The second proposition relates the property of having finite perimeter to the existence of right derivatives in zero of the aforementioned restriction of the covariogram, for any direction $u \in \Sph^{M-1}$, and allows to calculate the perimeter by integrating these derivatives over $\Sph^{M-1}$.
	
	\begin{Lemma}\label{t2}
		Let $W\subset\R^M$ be Lebesgue measurable and let $g_W$ be its generalized covariogram. Let $\tilde{y},\tilde{z} \in \R^M$. Define $y,z\in \left(\R^M\right)^{M-1}$ by $y:=(\tilde{y},0,\dots,0)$ and $z:=(\tilde{z},0,\dots,0)$. Then,
		\[ |g_W(y)-g_W(z)|\leq g_W(0,\dots,0)-g_W(y-z) \mbox{.} \]
	\end{Lemma}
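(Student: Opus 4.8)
The plan is to first collapse the generalized covariogram back to the classical one. Because $y=(\tilde y,0,\dots,0)$ and $z=(\tilde z,0,\dots,0)$ have only their first component nonzero, every factor in the intersection of \Cref{t1} except the first is $0+W=W$ and hence redundant. Reading off the definition directly, I would record the four identities
\[
g_W(y)=\Vol_M\big(W\cap(\tilde y+W)\big),\qquad g_W(z)=\Vol_M\big(W\cap(\tilde z+W)\big),
\]
\[
g_W(0,\dots,0)=\Vol_M(W),\qquad g_W(y-z)=\Vol_M\big(W\cap((\tilde y-\tilde z)+W)\big),
\]
so that the asserted inequality is really a statement about the ordinary covariogram $t\mapsto\Vol_M(W\cap(t+W))$.

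Next I would abbreviate $A:=\tilde y+W$ and $B:=\tilde z+W$. As translates of $W$ these satisfy $\Vol_M(A)=\Vol_M(B)=\Vol_M(W)=g_W(0,\dots,0)$, and since $A=(\tilde y-\tilde z)+B$ their intersection has volume $\Vol_M(A\cap B)=g_W(y-z)$. Consequently the right-hand side of the target inequality equals $\Vol_M(A)-\Vol_M(A\cap B)=\Vol_M(A\setminus B)$. Moreover, because $\Vol_M(A)=\Vol_M(B)$, comparing $\Vol_M(A)=\Vol_M(A\setminus B)+\Vol_M(A\cap B)$ with the analogous decomposition of $B$ gives the symmetry $\Vol_M(A\setminus B)=\Vol_M(B\setminus A)$.

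For the left-hand side I would decompose $W\cap A$ and $W\cap B$ along the partition of $A\cup B$ into $A\cap B$, $A\setminus B$ and $B\setminus A$. The common piece $W\cap A\cap B$ cancels in the difference, leaving
\[
g_W(y)-g_W(z)=\Vol_M\big(W\cap(A\setminus B)\big)-\Vol_M\big(W\cap(B\setminus A)\big).
\]
Both terms on the right are nonnegative and bounded above by $\Vol_M(A\setminus B)$ and $\Vol_M(B\setminus A)$ respectively, which are equal by the previous step. Hence both lie in the interval $[0,\Vol_M(A\setminus B)]$, so their difference is at most $\Vol_M(A\setminus B)$ in absolute value. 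This yields $|g_W(y)-g_W(z)|\le\Vol_M(A\setminus B)=g_W(0,\dots,0)-g_W(y-z)$, as required.

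The argument is elementary once the reduction to the classical covariogram is in place, and I do not anticipate a genuine obstacle. The one point needing care is the final estimate: the naive triangle-inequality bound would give $|g_W(y)-g_W(z)|\le\Vol_M(A\,\triangle\,B)=2\,\Vol_M(A\setminus B)$, which is off by a factor of two. It is therefore essential to exploit that the two remainder pieces $W\cap(A\setminus B)$ and $W\cap(B\setminus A)$ enter the difference with opposite signs, so that their common upper bound $\Vol_M(A\setminus B)$ controls the whole expression rather than twice that quantity.
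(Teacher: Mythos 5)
Your proof is correct and follows essentially the same route as the paper's: both reduce to the classical covariogram and bound $g_W(y)-g_W(z)$ by $\Vol_M\bigl((\tilde y+W)\setminus(\tilde z+W)\bigr)$ using monotonicity and additivity of Lebesgue measure. The only cosmetic difference is that you obtain the absolute value from the symmetry $\Vol_M(A\setminus B)=\Vol_M(B\setminus A)$ of equal-volume translates, whereas the paper swaps the roles of $\tilde y$ and $\tilde z$ and invokes $g_W(z-y)=g_W(y-z)$.
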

	
	\begin{proof}
		Let $A_1, A_2, A_3 \subset \R^M$ be Lebesgue measurable sets. We have
		\begin{align*}
		\Vol_M(A_1 \cap A_2) - \Vol_M(A_1 \cap A_3) &\leq \Vol_M(A_1 \cap A_2) - \Vol_M(A_1 \cap A_2 \cap A_3) \\
		&= \Vol_M((A_1 \cap A_2)\setminus(A_1 \cap A_2 \cap A_3)) \\
		&\leq \Vol_M(A_2\setminus (A_2 \cap A_3))\\
		&= \Vol_M(A_2) - \Vol_M(A_2 \cap A_3) \mbox{.}
		\end{align*}
		Set now $A_1=W$, $A_2=\tilde{y}+W$ and $A_3=\tilde{z}+W$. Then
		\begin{align*}
		g_W(y)-g_W(z) &= \Vol_M(W \cap (\tilde{y} + W)) - \Vol_M(W \cap (\tilde{z} + W)) \\
		&\leq \Vol_M(\tilde{y} + W) - \Vol_M((\tilde{y} + W) \cap (\tilde{z} + W)) \\
		&= \Vol_M(W) - \Vol_M(W \cap (\tilde{y} - \tilde{z} + W)) \\
		&= g_W(0,\dots,0) - g_W(y-z) \mbox{.}
		\end{align*}
		Due to $g_W(z-y)=g_W(y-z)$, the same inequality holds for $g_W(z)-g_W(y)$ and, thereby, for the absolute value of $g_W(y)-g_W(z)$.
	\end{proof}
	
	\begin{Lemma}\label{t3}
		Let $W\subset\R^M$ be Lebesgue measurable and let $g_W$ be its generalized covariogram. Let $\tilde{y} \in \R^M$ and define $y\in \left(\R^M\right)^{M-1}$ by $y:=(\tilde{y},0,\dots,0)$. Then,
		\[ g_W(0,\dots,0) - g_W(y) = \frac{1}{2} \int \limits_{\R^M} |\mathds{1}_W(x+\tilde{y}) - \mathds{1}_W(x)| \diff x \mbox{.} \]
	\end{Lemma}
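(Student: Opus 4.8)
The plan is to reduce both sides to explicit Lebesgue integrals of indicator functions of $W$ and then to match them by means of the elementary pointwise identity $|a-b| = a+b-2ab$, which holds whenever $a,b \in \{0,1\}$.

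First I would exploit the special form $y=(\tilde{y},0,\dots,0)$ in the integral representation of the generalized covariogram recorded in \Cref{r2}. Because $\mathds{1}_W(x-0)=\mathds{1}_W(x)$ and the indicator function satisfies $\mathds{1}_W(x)^j=\mathds{1}_W(x)$ for every $j\geq 1$, all of the $M-1$ shifted factors except the one carrying $\tilde{y}$ collapse, leaving
\[ g_W(y) = \int_{\R^M} \mathds{1}_W(x)\,\mathds{1}_W(x-\tilde{y})\,\diff x \quad\text{and}\quad g_W(0,\dots,0) = \int_{\R^M} \mathds{1}_W(x)\,\diff x = \Vol_M(W). \]

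Next I would rewrite the right-hand side. Applying the identity above with $a=\mathds{1}_W(x+\tilde{y})$ and $b=\mathds{1}_W(x)$ gives, after integrating termwise,
\[ \int_{\R^M} |\mathds{1}_W(x+\tilde{y})-\mathds{1}_W(x)|\,\diff x = \int_{\R^M}\mathds{1}_W(x+\tilde{y})\,\diff x + \int_{\R^M}\mathds{1}_W(x)\,\diff x - 2\int_{\R^M}\mathds{1}_W(x+\tilde{y})\,\mathds{1}_W(x)\,\diff x. \]
By translation invariance of the Lebesgue measure the first two integrals both equal $\Vol_M(W)=g_W(0,\dots,0)$, while the substitution $u=x+\tilde{y}$ turns the cross term into $\int_{\R^M}\mathds{1}_W(u)\,\mathds{1}_W(u-\tilde{y})\,\diff u = g_W(y)$. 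Hence the displayed integral equals $2\big(g_W(0,\dots,0)-g_W(y)\big)$, and dividing by $2$ yields the claim.

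There is essentially no serious obstacle here: the statement is a direct computation. The only points requiring care are the collapse of the superfluous covariogram factors (which relies on $W$ merely being measurable, not convex) and the bookkeeping of the translation substitution that identifies the cross term with $g_W(y)$; the identity $|a-b|=a+b-2ab$ is what lets one avoid splitting the domain of integration according to the four sign cases of $\mathds{1}_W(x+\tilde{y})-\mathds{1}_W(x)$.
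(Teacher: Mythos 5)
Your proposal is correct and follows essentially the same route as the paper: the paper expands $|\mathds{1}_W(x+\tilde{y})-\mathds{1}_W(x)|$ as $(\mathds{1}_W(x+\tilde{y})-\mathds{1}_W(x))^2$, which upon using $\mathds{1}_W^2=\mathds{1}_W$ is exactly your identity $|a-b|=a+b-2ab$, and then integrates termwise with the same translation argument for the cross term.
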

	
	\begin{proof}
		Using basic properties of the indicator function of a set, we have
		\begin{align*}
		&\int \limits_{\R^M} |\mathds{1}_W(x+\tilde{y}) - \mathds{1}_W(x)| \diff x = \int \limits_{\R^M} (\mathds{1}_W(x+\tilde{y}) - \mathds{1}_W(x))^2 \diff x \\
		&= \int \limits_{\R^M} \mathds{1}_W(x+\tilde{y})^2 \diff x + \int \limits_{\R^M} \mathds{1}_W(x)^2 \diff x - 2 \int \limits_{\R^M} \mathds{1}_W(x+\tilde{y}) \mathds{1}_W(x) \diff x \\
		&= 2 \Vol_M(W) - 2 \Vol_M(W \cap (\tilde{y} + W)) = 2 \left( g_W(0,\dots,0) - g_W(y) \right) \mbox{,}
		\end{align*}
		where the second to last equality follows from integrating $\mathds{1}_W(x)$ over $R^M+\tilde{y}$ instead of $\mathds{1}_W(x+\tilde{y})$ over $\R^M$ in the first integral.
	\end{proof}
	
	The following proposition will be used in the proof of \Cref{t10}. It relates the finiteness of the directional variation of a set $W$ in the direction $u\in \Sph^{M-1}$ to the existence of the directional derivate in $0$, in direction $u$, of the restriction of the generalized covariogram to a single argument, as well as to the Lipschitz continuity in direction $u$ of such a restrictions. The directional variation of $W$ in the direction $u$ can then be calculated as the Lipschitz constants of this restriction.
	
	\begin{Proposition}\label{t4}
		Let $W\subset\R^M$ be Lebesgue measurable, let $g_W$ be its generalized covariogram and let $u \in \Sph^{M-1}$. Define $y:=(u,0,\dots,0)$ and let $r\in \R$ with $r \neq 0$. The following statements are equivalent:
		\begin{enumerate}[(i)]
			\item $W$ has finite directional variation $\dVar_u(W)$,
			\item the derivative $\lim\limits_{r \rightarrow 0} \frac{g_W(0,\dots,0)-g_W(ry)}{|r|}$ exists and is finite,
			\item the function $g_{W}^u: r \mapsto g_W(ry)$ is Lipschitz.
		\end{enumerate}
		Additionally, the Lipschitz constant of $g_{W}^u$ is
		\[ \Lip(g_{W}^u) = \lim\limits_{r \rightarrow 0} \frac{g_W(0,\dots,0)-g_W(ry)}{|r|} = \frac{1}{2} \dVar_u(W) \mbox{.} \]
	\end{Proposition}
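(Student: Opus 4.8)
The plan is to reduce the restricted generalized covariogram $g_{W}^u$ to a difference-quotient integral of $\mathds{1}_W$ and then read off all three equivalences directly from \Cref{p2}. First I would note that, since the last $M-2$ arguments of $y=(u,0,\dots,0)$ vanish, we have $0+W=W$ and hence
\[ g_{W}^u(r)=g_W(ru,0,\dots,0)=\Vol_M\big(W\cap(ru+W)\big), \]
so the generalized covariogram restricted to this line is nothing but the classical one-argument covariogram evaluated along $ru$. Applying \Cref{t3} with $\tilde y=ru$ then yields the central identity
\[ g_W(0,\dots,0)-g_{W}^u(r)=\tfrac12\int_{\R^M}\big|\mathds{1}_W(x+ru)-\mathds{1}_W(x)\big|\,\diff x, \]
which is the bridge between the covariogram and the directional variation.

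For (i)$\Leftrightarrow$(ii) I would divide this identity by $|r|$ and invoke \Cref{p2} with $f=\mathds{1}_W\in L^1(\R^M)$: its second assertion gives
\[ \lim_{r\to0}\frac{g_W(0,\dots,0)-g_{W}^u(r)}{|r|}=\tfrac12\lim_{r\to0}\int_{\R^M}\frac{|\mathds{1}_W(x+ru)-\mathds{1}_W(x)|}{|r|}\,\diff x=\tfrac12\dVar_u(W), \]
where the limit exists in $[0,\infty]$, the $r\leftrightarrow-r$ symmetry of the integrand making the two-sided limit legitimate. This simultaneously proves (i)$\Leftrightarrow$(ii) — the limit being finite exactly when $\dVar_u(W)<\infty$ — and delivers the middle equality of the stated formula.

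For (i)$\Rightarrow$(iii) I would combine \Cref{t2}, \Cref{t3} and the first (non-asymptotic) bound of \Cref{p2}: for $r_1,r_2\in\R$, \Cref{t2} with $\tilde y=r_1u$, $\tilde z=r_2u$ gives
\[ |g_{W}^u(r_1)-g_{W}^u(r_2)|\le g_W(0,\dots,0)-g_{W}^u(r_1-r_2)=\tfrac12\int_{\R^M}|\mathds{1}_W(x+(r_1-r_2)u)-\mathds{1}_W(x)|\,\diff x\le\tfrac12|r_1-r_2|\dVar_u(W), \]
so $g_{W}^u$ is Lipschitz with $\Lip(g_{W}^u)\le\tfrac12\dVar_u(W)$. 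Conversely, if (iii) holds with constant $L$, then $\frac{g_W(0,\dots,0)-g_{W}^u(r)}{|r|}=\frac{g_{W}^u(0)-g_{W}^u(r)}{|r|}\le L$ for every $r\neq0$; letting $r\to0$ and using the limit computed above forces $\tfrac12\dVar_u(W)\le L<\infty$, which closes the loop back to (i) and yields $\Lip(g_{W}^u)\ge\tfrac12\dVar_u(W)$ once one observes that $\Lip(g_{W}^u)$ dominates the supremum over $r\neq0$ of these difference quotients (here I use $g_{W}^u(r)\le g_W(0,\dots,0)$, since $W\cap(ru+W)\subseteq W$). The two bounds give $\Lip(g_{W}^u)=\tfrac12\dVar_u(W)$.

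The step I expect to require the most care is not any single estimate but the bookkeeping that makes the Lipschitz constant exact: the upper bound comes from \Cref{t2} together with \Cref{p2}, while the matching lower bound must be extracted from the $r\to0$ limit, and one has to ensure that the monotonicity and symmetry facts ($g_{W}^u(r)\le g_W(0,\dots,0)$ and evenness in $r$) are in place so that the two-sided derivative and the supremum defining $\Lip(g_{W}^u)$ really coincide with $\tfrac12\dVar_u(W)$. Conceptually the \emph{generalized} nature of the covariogram plays no role in this proposition, since the vanishing arguments collapse the object to the classical covariogram along a single direction; the genuine content is carried entirely by the two preliminary lemmas and \Cref{p2}.
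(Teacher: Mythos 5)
Your proposal is correct and follows essentially the same route as the paper's own proof: the identity from \Cref{t3} combined with \Cref{p2} gives (i)$\Leftrightarrow$(ii) and the value of the limit, \Cref{t2} plus \Cref{p2} gives (i)$\Rightarrow$(iii) with $\Lip(g_W^u)\le\tfrac12\dVar_u(W)$, and the difference quotient at $0$ gives the converse together with the matching lower bound. Your additional observations (evenness in $r$, $g_W^u(r)\le g_W(0,\dots,0)$, and the collapse to the classical covariogram) are correct but only make explicit what the paper leaves implicit.
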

	
	\begin{proof}
		\Cref{t3} implies
		\[ \frac{g_W(0,\dots,0)-g_W(ry)}{|r|} = \frac{1}{2} \int \limits_{\R^M} \frac{|\mathds{1}_W(x+ru) - \mathds{1}_W(x)|}{|r|}\, \diff x \mbox{.} \]
		Applying \Cref{p2} with $f=\mathds{1}_W$, we obtain the equivalence of (i) and (ii) as well as the formula
		\[ \lim\limits_{r \rightarrow 0} \frac{g_W(0,\dots,0)-g_W(ry)}{|r|} = \frac{1}{2} \dVar_u(W) \mbox{.} \]
		
		We show now that (i) implies (iii). By \Cref{t2} we get for $r,s \in \R\setminus\{0\}$, that
		\begin{align*}
		|g_W(ry)-g_W(sy)| &\leq g_W(0,\dots,0) - g_W((r-s)y) \\
		&= \frac{1}{2} \int \limits_{\R^M} |\mathds{1}_W(x+(r-s)u) - \mathds{1}_W(x)|\, \diff x \\
		&= \frac{1}{2} |r-s| \int \limits_{\R^M} \frac{|\mathds{1}_W(x+(r-s)u) - \mathds{1}_W(x)|}{|r-s|}\, \diff x \\
		&\leq \frac{1}{2} \dVar_u(W) |r-s| \mbox{,}
		\end{align*}
		where the last inequality stems again from applying \Cref{p2} with $f=\mathds{1}_W$.
		Hence, $\Lip(g_{W}^u) \leq \frac{1}{2} \dVar_u(W)$.
		
		It remains to show that (iii) implies (i). For all $r \neq 0$ we have
		\[ \Lip(g_{W}^u) \geq \frac{g_W(0,\dots,0)-g_W(ry)}{|r|} = \frac{1}{2} \int \limits_{\R^M} \frac{|\mathds{1}_W(x+ru) - \mathds{1}_W(x)|}{|r|}\, \diff x \mbox{.} \]
		By \Cref{p2} the right-hand side converges to $\frac{1}{2} \dVar_u(W)$, as $r$ goes to $0$. Hence, $W$ has finite directional variation in the direction of $u$ and $\Lip(g_{W}^u) \geq \frac{1}{2} \dVar_u(W)$.
		
		Subsequently, (i) and (iii) are equivalent and
		\[ \Lip(g_{W}^u) = \frac{1}{2} \dVar_u(W) \]
		holds.
	\end{proof}
	
	\begin{Remark}\label{t5}
		Note that for $g_{W}^u$, i.e., the restriction of the generalized covariogram to the first argument along the direction $u\in \Sph^{M-1}$, the right derivative in $0$ can be expressed as:
		\[ (g_{W}^u)^\prime (0+) = \lim\limits_{r \rightarrow 0+} \frac{g_W(ry)-g_W(0,\dots,0)}{r} = -\lim\limits_{r \rightarrow 0} \frac{g_W(0,\dots,0)-g_W(ry)}{|r|} \mbox{.} \]
	\end{Remark}
	
	The second proposition of this section states the equivalence of the finiteness of the perimeter of a set $W$ and the existence of the right derivative in $0$ for the function $g_{W}^u$ introduced in \Cref{t4} and allows to determine the perimeter by means of integrating this right derivatives over the unit sphere. 
	
	\begin{Proposition}\label{t6}
	Let $W\subset\R^M$ be Lebesgue measurable and $g_W$ be its generalized covariogram. Define $y:=(u,0,\dots,0)$ and let $r \in \R$ with $r \neq 0$. The following two statements are equivalent:
		\begin{enumerate}[(i)]
			\item $W$ has finite perimeter $\Per(W)$,
			\item for all $u \in \Sph^{M-1}$ the derivative $(g_{W}^u)^\prime (0+) = \lim\limits_{r \rightarrow 0} \frac{g_W(ry)-g_W(0,\dots,0)}{r}$ exists and is finite.
		\end{enumerate}
		Additionally,
		\begin{equation}\label{e6}
		\Per(W)=-\frac{1}{\kappa_{M-1}} \int_{\Sph^{M-1}} (g_{W}^u)^\prime (0+) \mathcal{H}^{M-1}(du) \mbox{.}
		\end{equation}
	\end{Proposition}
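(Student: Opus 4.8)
The plan is to read off \Cref{t6} from \Cref{p1}, applied to the indicator function $f=\mathds{1}_W$ on $U=\R^M$, in combination with \Cref{t4} and \Cref{t5}. Since $g_W$ is finite, $W$ has finite Lebesgue measure, hence $\mathds{1}_W \in L^1(\R^M)$ and \Cref{p1} is applicable with $U=\R^M$ open; recall also that $\Per(W)=\dVar(\mathds{1}_W)$ and $\dVar_u(W)=\dVar_u(\mathds{1}_W)$ by definition.

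The central step is the following pointwise identity. Fixing $u\in\Sph^{M-1}$ and combining the formula of \Cref{t4} with \Cref{t5}, the right derivative $(g_{W}^u)^\prime(0+)$ exists and is finite if and only if $W$ has finite directional variation $\dVar_u(W)$, and in that case
\[ (g_{W}^u)^\prime(0+) = -\lim_{r\to 0}\frac{g_W(0,\dots,0)-g_W(ry)}{|r|} = -\tfrac12\,\dVar_u(W) \mbox{.} \]
I would isolate this as the bridge between the derivative formulation of statement (ii) and the variation formulation handled by \Cref{p1}.

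For the equivalence (i)$\Leftrightarrow$(ii) I would argue as follows. If $\Per(W)=\dVar(\mathds{1}_W)<\infty$, then \Cref{p1} gives $\dVar_u(W)<\infty$ for every $u\in\Sph^{M-1}$, so the bridge identity yields existence and finiteness of $(g_{W}^u)^\prime(0+)$ for all $u$. Conversely, if $(g_{W}^u)^\prime(0+)$ exists and is finite for all $u$, the bridge gives $\dVar_u(W)<\infty$ for all $u$, and the equivalence (i)$\Leftrightarrow$(ii) of \Cref{p1} returns $\dVar(\mathds{1}_W)=\Per(W)<\infty$. Finally, assuming these equivalent conditions hold, I would insert the bridge identity $\dVar_u(W)=-2(g_{W}^u)^\prime(0+)$ into the integral representation
\[ \Per(W)=\dVar(\mathds{1}_W)=\frac{1}{2\kappa_{M-1}}\int_{\Sph^{M-1}}\dVar_u(W)\,\mathcal{H}^{M-1}(du) \]
of \Cref{p1}, which immediately produces \eqref{e6}.

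Since the whole analytic substance is already packaged in \Cref{p1} and \Cref{t4}, I expect no genuine obstacle; the only points needing care are confirming $\mathds{1}_W\in L^1(\R^M)$ so that \Cref{p1} applies, and noting that the measurability of $u\mapsto\dVar_u(W)$ required for the integral is supplied implicitly by \Cref{p1}, which already states the integral formula. It is also worth observing that \eqref{e6} is asserted only under the finite-perimeter hypothesis, so the case $\Per(W)=\infty$ need not be treated separately, being excluded by the failure of (ii).
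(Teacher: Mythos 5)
Your proposal is correct and follows essentially the same route as the paper: the paper likewise combines \Cref{t4} and \Cref{t5} to obtain the identity $(g_{W}^u)^\prime(0+)=-\tfrac12\dVar_u(W)$ and then applies \Cref{p1} with $f=\mathds{1}_W$ to deduce both the equivalence and \eqref{e6}. Your additional remarks on $\mathds{1}_W\in L^1(\R^M)$ and on measurability are sensible housekeeping that the paper leaves implicit.
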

	
	\begin{proof}
		\Cref{t4} and \Cref{t5} yield the identity
		\[ (g_{W}^u)^\prime (0+) = \lim\limits_{r \rightarrow 0+} \frac{g_W(ry)-g_W(0,\dots,0)}{r} = -\frac{1}{2} \dVar_u(W) \mbox{.} \]
		The equivalence of (i) and (ii), as well as \eqref{e6}, derive from applying \Cref{p1} with $f=\mathds{1}_W$ to this identity.
		
	\end{proof}
	
	\begin{Remark}\label{r1}
		It is known, that if $W\subset \R^M$ is a convex body, then $V_u(W)=2\mathcal{H}^{M-1}(W|u^\perp)$ holds for its directional variation, where $W|u^\perp$ is the orthogonal projection on the hyperplane with normal $u \in \Sph^{M-1}$. This result can be found in \cite[Eq. (10.1)]{S} and is restated in \cite{G}.
	\end{Remark}
	
	\section{Proof of the main results}
	
	The core idea of the proof of \Cref{t8} is to use the inequality
	\[ F_T^{(k)}(\xi_n) \leq N_T(\xi_n) \deg(\xi_n)^k \mbox{.} \]
	Since $N_T(\xi_n)= N_T(\xi_n)\, \mathds{1}(N_T(\xi_n) > 0) + N_T(\xi_n)\, \mathds{1}(N_T(\xi_n) = 0)$, we obtain by rearranging and taking the expectation, that
	\[ \E\left[ \deg(\xi_n)^k \right] \geq \E\left[ \frac{F_T^{(k)}(\xi_n) }{N_T(\xi_n)}\, \mathds{1}(N_T(\xi_n) > 0) \right] \mbox{.} \]
	To process this term it is necessary to ensure that the expectation of $N_T(\xi_n)$ is asymptotically bounded from above by a positive constant. This happens in two steps and is the content of \Cref{t10}. First, this expectation will be upper bounded in terms of $n$ and $T$, and, second, $T$ is chosen in such a way that the expectation indeed is asymptotically bounded from above by a constant. A possible choice of $T$ turns out to be $n^{-1/(M-1)}$ (In fact, this choice guarantees that the expectation is asymptotically also lower bounded by a positive constant, see \Cref{rem1}). This is also the step where the results about the generalized covariogram come into play.
	
	The above inequality implies that
	\[ \E\left[\deg (\xi_n)^k\right] \geq \frac{1}{K} \E \left[ F^{(k)}_T(\xi_n) \mathds{1}\left(0< N_T(\xi_n) \leq K \right) \right] \mbox{,} \]
	for all $K>0$.
	As will be shown in the proof of \Cref{t8}, further dissection of this term will make it necessary to find a lower bound on $\E\left[ \deg(x_1,\dots,x_M; \xi_{n-M}\cup \{ x_1,\dots,x_M \})^k \right]$. Here, $x_1,\dots,x_M$ are fixed points and $\xi_{n-M}$ denotes a random point set of $n-M$ points chosen uniformly and independently from the set $W$. This is elaborated in \Cref{t9} and gives the lower bound $cn^k$ for some constant $c>0$. A particular choice of $K$, namely, $K=2(M+1)\log n$ will then lead to the desired result.\\ \\
	The proof of the second part relies on the idea to choose a grid with mesh width $1/\sqrt[M]{n}$. The number of points contained in one of the cubes of this mesh, which are totally included in the convex body $W$, will contain a binomial distributed number of points of $\xi_n$, where each point has probability $1/n$ to lie within that cube. For large $n$, this number can be approximated by a Poisson random variable with rate $1$. It turns out that one can handle the Poisson($1$) random variables somehow similarly to the case where they are independent and show that, with probability going to $1$, for one of these cubes the contained set of random points has maximal degree. For the number of points $n$ going to infinity, this implies that also the degree of $\xi_n$ goes to infinity in probability.
	
	\begin{Proposition}\label{t10}
		Let $W\subset\R^M$, $M\geq 2$, be a convex body. For all $M \in \N$ and $T>0$ it holds that	
		\begin{equation}\label{eq8}
		\E\left[N_T(\xi_n)\right] \leq M\kappa_M^{M-1} \binom{n}{M} T^{M(M-1)} \Vol_M(W)(1+\mathcal{O}(T)),
		\end{equation}
		as $T \to 0$.
	\end{Proposition}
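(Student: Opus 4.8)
The plan is to turn the expectation into a deterministic integral via the binomial Mecke formula \eqref{e7} and then to strip off the centre point from the other $M-1$ points. Applying \eqref{e7} with $k=M$ to the symmetric integrand $f(x_1,\dots,x_M)=\mathds{1}(\exists i\in\mathcal M:\{x_1,\dots,x_M\}\subset\mathbb{B}^M_T(x_i))$ gives
\[ \E[N_T(\xi_n)]=\binom{n}{M}\int_W\!\!\cdots\!\int_W \mathds{1}\bigl(\exists i\in\mathcal M:\{x_1,\dots,x_M\}\subset\mathbb{B}^M_T(x_i)\bigr)\,\diff x_1\cdots\diff x_M. \]
I would then bound the existential indicator by the union bound, $\mathds{1}(\exists i:\dots)\le\sum_{i=1}^M\prod_{j\neq i}\mathds{1}(x_j\in\mathbb{B}^M_T(x_i))$. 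Because both the integrand and the domain $W^M$ are invariant under permutations of the coordinates, every one of the $M$ summands integrates to the same value, namely the one with centre $x_1$; integrating out $x_2,\dots,x_M$, each of which then ranges over $W\cap\mathbb{B}^M_T(x_1)$, collapses the bound to
\[ \E[N_T(\xi_n)]\le M\binom{n}{M}\int_W \Vol_M\bigl(W\cap\mathbb{B}^M_T(x)\bigr)^{M-1}\,\diff x. \]

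It remains to estimate $I(T):=\int_W \Vol_M(W\cap\mathbb{B}^M_T(x))^{M-1}\,\diff x$. The trivial inclusion $W\cap\mathbb{B}^M_T(x)\subseteq\mathbb{B}^M_T(x)$ gives $\Vol_M(W\cap\mathbb{B}^M_T(x))\le\kappa_M T^M$, hence $I(T)\le\kappa_M^{M-1}T^{M(M-1)}\Vol_M(W)$, which already produces the leading term of \eqref{eq8}. To pin down the sharper factor $(1+\mathcal O(T))$ (and, for the matching lower bound of \Cref{rem1}), I would split $W$ into the inner parallel set $W_{-T}=\{x\in W:\mathrm{dist}(x,\partial W)\ge T\}$, on which $\mathbb{B}^M_T(x)\subseteq W$ so the integrand equals $(\kappa_M T^M)^{M-1}$ exactly, and the boundary strip $W\setminus W_{-T}$, on which the integrand stays in $[0,(\kappa_M T^M)^{M-1}]$. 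The difference between $I(T)$ and $\kappa_M^{M-1}T^{M(M-1)}\Vol_M(W)$ is then at most $(\kappa_M T^M)^{M-1}\,\Vol_M(W\setminus W_{-T})$ in absolute value.

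The main obstacle is controlling this boundary strip, i.e.\ showing $\Vol_M(W\setminus W_{-T})=\mathcal O(T)$ as $T\to0$. For a convex body this is exactly where finiteness of the perimeter enters: the covariogram identity of \Cref{t6} expresses $\Per(W)=\mathcal H^{M-1}(\partial W)<\infty$ through first-order boundary behaviour, and a Steiner-type estimate bounds the strip volume by $\Per(W)\,T(1+o(1))$. Dividing by $\Vol_M(W)$ turns this into a relative error of order $T$, so that $I(T)=\kappa_M^{M-1}T^{M(M-1)}\Vol_M(W)(1+\mathcal O(T))$, which combined with the two displays above yields \eqref{eq8}.
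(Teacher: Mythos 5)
Your argument is correct, and it reaches the stated bound by a genuinely more elementary route than the paper. After the Mecke formula and the union bound (which the paper also uses), you integrate out the $M-1$ satellite points directly against $\Vol_M(W\cap\mathbb{B}^M_T(x))\le\kappa_M T^M$, which immediately gives $\E[N_T(\xi_n)]\le M\kappa_M^{M-1}\binom{n}{M}T^{M(M-1)}\Vol_M(W)$ with no error term at all --- already stronger than \eqref{eq8}, so for the upper bound as stated you could in fact stop there. The paper instead keeps the inner integral as the generalized covariogram $g_W(x_1,\dots,x_{M-1})$, passes to polar coordinates, Taylor-expands $g_W$ at the origin using \Cref{t4} and \Cref{t6}, and controls the remainder via the Lipschitz bound and Cauchy's surface area formula; this buys the exact second-order coefficient $-\tfrac{M-1}{M+1}\kappa_{M-1}\kappa_M^{M-2}\Per(W)$ and is what the covariogram machinery of Section~3 was built for. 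Your inner-parallel-set decomposition $W=W_{-T}\cup(W\setminus W_{-T})$ recovers the two-sided estimate $I(T)=\kappa_M^{M-1}T^{M(M-1)}\Vol_M(W)(1+\mathcal{O}(T))$ needed for \Cref{rem1} as well, at the cost of not identifying the constant in the $\mathcal{O}(T)$.

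One small correction: the bound $\Vol_M(W\setminus W_{-T})=\mathcal{O}(T)$ is not really what \Cref{t6} provides --- that proposition concerns the right derivatives of the covariogram, not inner parallel bodies. For a convex body the strip estimate is elementary: by the coarea formula $\Vol_M(W)-\Vol_M(W_{-T})=\int_0^T\mathcal{H}^{M-1}(\partial W_{-t})\,\diff t\le\mathcal{H}^{M-1}(\partial W)\,T$, using monotonicity of surface area under inclusion of convex bodies. With that substitution your proof is complete and self-contained, and notably does not need the generalized covariogram at all.
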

	
	\begin{proof}
		The first step consists in applying the binomial equivalent of the Mecke formula \eqref{e7} with $k=M$ and $f(x_1,\dots,x_M)=\mathds{1} \left( \exists i \in \mathcal{M}: \{x_1,\dots,x_M\} \subset \mathbb{B}^M_{T}\left(x_i\right) \right)$ to get
		\begin{align*}
		\E&\left[N_T(\xi_n)\right]  \\ &= \E \left[ \sum_{\{x_1,\dots,x_M\}\in {\xi_n \brack M}} \mathds{1} \left( \exists i \in \mathcal{M}: \{x_1,\dots,x_M\} \subset \mathbb{B}^M_{T}\left(x_i\right) \right) \right] \\
		&= \binom{n}{M} \int \limits_W \dots \int \limits_W \mathds{1} \left( \exists i \in \mathcal{M}: \{x_1,\dots,x_M\} \subset \mathbb{B}^M_{T}\left(x_i\right) \right) \diff x_1 \dots \diff x_M\\
		&\leq \sum_{i=1}^{M} \binom{n}{M} \int \limits_W \dots \int \limits_W \mathds{1} \left( \{x_1,\dots,x_M\} \subset \mathbb{B}^M_{T}\left(x_i\right) \right) \diff x_1 \dots \diff x_M \\
		&=M\binom{n}{M} \int \limits_{\R^M} \dots \int \limits_{\R^M} \left( \prod_{i=1}^{M-1} \mathds{1}\left( \| x_i \| \leq T \right) \right) \\ &~~~~~\times \int \limits_{\R^M} \mathds{1}(x_M \in W, x_M-x_1 \in W,\dots,x_M-x_{M-1} \in W)\, \diff x_M\, \diff x_1 \dots \diff x_{M-1} \\
		&=M\binom{n}{M} \int \limits_{\mathbb{B}^M_{T}(0)} \dots \int \limits_{\mathbb{B}^M_{T}(0)} g_W(x_1,\dots,x_{M-1})\, \diff x_1 \dots \diff x_{M-1} \\
		&=M\binom{n}{M} \int \limits_{0}^{T} \dots \int \limits_{0}^{T} \left( \prod_{i=1}^{M-1} r_i^{M-1} \right) \\ &~~~~~\times \int \limits_{\Sph^{M-1}} \dots \int \limits_{\Sph^{M-1}} g_W(r_1 u_1,\dots,r_{M-1} u_{M-1}) \mathcal{H}^{M-1}(\diff u_1) \dots \mathcal{H}^{M-1}(\diff u_{M-1})\,\diff r_1 \dots \diff r_{M-1} \mbox{.}
		\end{align*}
		For a fixed vector $u=(u_1,\dots,u_{M-1}) \in \left( \Sph^{M-1} \right)^{M-1}$ the Taylor expansion of the generalized covariogram $g_W(r_1 u_1,\dots,r_{M-1} u_{M-1})$, in $r=(r_1,\dots,r_{M-1})=(0,\dots,0)$, gives
		\begin{align*}
		g_W&(r_1 u_1,\dots,r_{M-1} u_{M-1}) \\
		&= g_W(0,\dots,0) + \sum_{i=1}^{M-1} r_i \frac{\partial}{\partial r_i} g_W(r_1 u_1,\dots,r_{M-1} u_{M-1})|_{r=(0,\dots,0)} + o(r_1,\dots,r_{M-1}) \mbox{.}
		\end{align*}
		Recall Equation \eqref{eq:perm}, i.e., the invariance of the covariogram $g_W$ under permutation of its arguments.
		We can conclude that integration of the Taylor expansion is possible since due to \eqref{eq:perm} and \Cref{t6} (ii) the partial derivative of $g_W(r_1 u_1,\dots,r_{M-1} u_{M-1})$ with respect to $r_i$ at $r=(r_1,\dots,r_{M-1})=0$ exists and is finite for each $i \in \{1,\dots,M-1\}$. This is done term by term. First,
		\begin{align*} 
		&\int \limits_{\Sph^{M-1}} \dots \int \limits_{\Sph^{M-1}} g_W(0,\dots,0) \mathcal{H}^{M-1}(\diff u_1) \dots \mathcal{H}^{M-1}(\diff u_{M-1}) \\ &= \Vol_M(W) \int \limits_{\Sph^{M-1}} \dots \int \limits_{\Sph^{M-1}} \mathcal{H}^{M-1}(\diff u_1) \dots \mathcal{H}^{M-1}(\diff u_{M-1}) \\
		&=(M \kappa_M)^{M-1} \Vol_M(W) \mbox{.}
		\end{align*}
		Second, by \Cref{t6} (iii), as well as \eqref{eq:perm}, one has for each $i \in \{1,\dots,M-1\}$,
		\begin{align*} 
		&\int \limits_{\Sph^{M-1}} \dots \int \limits_{\Sph^{M-1}} \frac{\partial}{\partial r_i} g_W(r_1 u_1,\dots,r_{M-1} u_{M-1})|_{r=(0,\dots,0)} \mathcal{H}^{M-1}(\diff u_1) \dots \mathcal{H}^{M-1}(\diff u_{M-1}) \\ &= - \kappa_{M-1} \mathcal{H}^{M-1}(\partial W) \int \limits_{\Sph^{M-1}} \dots \int \limits_{\Sph^{M-1}} \mathcal{H}^{M-1}(\diff u_1) \dots \mathcal{H}^{M-1}(\diff u_{M-1}) \\
		&=-\kappa_{M-1}(M \kappa_M)^{M-2} \Per(W) \mbox{.}
		\end{align*}
		
		Note that by \eqref{eq:perm}, for given $i \in \{1,\dots,M-1\}$ and $u_i \in \Sph^{M-1}$, $g_W(0,\dots,0,r_i u_i,0,\dots,0)=g_W(r_i u_i,0,\dots,0)=g_{W}^{u_i}(r_i)$ is a Lipschitz function in $r_i$. By \Cref{t4}, its Lipschitz constant is half of the bounded directional variation $V_{u_i}(W)$ of $W$ in the direction of $u_i$, which, by \Cref{r1}, is, due to the convexity of $W$, the Hausdorff measure $\mathcal{H}^{M-1}(W|u_i^\perp)$ of the orthogonal projection of $W$ onto the hyperplane $u_i^\perp$ orthogonal to $u_i$. This is used to show that
		\begin{align*}
		&|g_W(r_1 u_1,\dots,r_{M-1} u_{M-1})-g_W(0,\dots,0)| =g_W(0,\dots,0)-g_W(r_1 u_1,\dots,r_{M-1} u_{M-1}) \\
		&=\Vol_M\left( \bigcup_{i=1}^{M-1} \left( W \setminus (r_i u_i + W) \right) \right) \leq \sum_{i=1}^{M-1} \Vol_M\left( W\setminus(r_i u_i + W) \right) \\
		&=\sum_{i=1}^{M-1} \left( g_W(0,\dots,0) - g_W(0,\dots,0,r_i u_i,0,\dots,0) \right) \leq \sum_{i=1}^{M-1} \Lip\left(g_{W}^{u_i}\right)r_i \\
		&= \sum_{i=1}^{M-1} \mathcal{H}^{M-1}\left( W|u_i^\perp \right)r_i \mbox{.}
		\end{align*}
		Therefore, $\Vol_M(W) \geq g_W(r_1 u_1,\dots,r_{M-1} u_{M-1}) \geq \Vol_M(W) - \sum_{i=1}^{M-1} \mathcal{H}^{M-1}(W|u_i^\perp)r_i$ holds. Furthermore, by Cauchy's surface area formula \cite[Eq. (6.12)]{SW} it follows that
		\begin{align*}
		(M \kappa_M)^{M-1} \Vol_M(W) &\geq \int \limits_{\Sph^{M-1}}\dots\int \limits_{\Sph^{M-1}} g_W(r_1 u_1,\dots,r_{M-1} u_{M-1}) \mathcal{H}^{M-1}(\diff u_1) \dots \mathcal{H}^{M-1}(\diff u_{M-1}) \\
		&\geq (M \kappa_M)^{M-1} \Vol_M(W) - (M\kappa_M)^{M-2}\kappa_{M-1} \Per(W)\sum_{i=1}^{M-1} r_i \mbox{.}
		\end{align*}
		Thus, the $o(r_1,\dots,r_{M-1})$-term is positive and bounded by $(M\kappa_M)^{M-2}\sum_{i=1}^{M-1} r_i \kappa_{M-1} \Per(W)$ allowing us to infer the relation
		\begin{align*}
		&\int \limits_{\Sph^{M-1}} \dots \int \limits_{\Sph^{M-1}} g_W(r_1 u_1,\dots,r_{M-1} u_{M-1}) \mathcal{H}^{M-1}(\diff u_1) \dots  \mathcal{H}^{M-1}(\diff u_{M-1}) \\
		&=(M \kappa_M)^{M-2} \left( M\kappa_M \Vol_M(W) - \sum_{i=1}^{M-1} r_i \kappa_{M-1} \Per(W) \right) + o(r_1,\dots,r_{M-1})
		\end{align*}
		for all Lebesgue measurable sets $W$ with finite perimeter.
		 
		Therefore, one has that
		\begin{align*}
		& M \binom{n}{M} \int \limits_{0}^{T}\dots\int \limits_{0}^{T} \left( \prod_{i=1}^{M-1} r_i^{M-1} \right) \\ &~~~~~\times \int \limits_{\Sph^{M-1}}\dots\int \limits_{\Sph^{M-1}} g_W(r_1 u_1,\dots,r_{M-1} u_{M-1}) \mathcal{H}^{M-1}(\diff u_1) \dots \mathcal{H}^{M-1}(\diff u_{M-1})\,\diff r_1 \dots \diff r_{M-1} \\
		&=M \binom{n}{M} \int \limits_{0}^{T}\dots\int \limits_{0}^{T} (M \kappa_M)^{M-1} \Vol_M(W)\prod_{i=1}^{M-1} r_i^{M-1} \\
		&~~~-(M \kappa_M)^{M-2} \sum_{j=1}^{M-1} \kappa_{M-1} \Per(W) r_j^M \prod_{\substack{i=1 \\ i \neq j}}^{M-1} r_i^{M-1} + o(r_1^{M},\dots,r_{M-1}^{M})\, \diff r_1 \dots\diff r_{M-1} \\
		&=M \binom{n}{M} \left( (M\kappa_M)^{M-1} \frac{T^{M(M-1)}}{M^{M-1}} \Vol_M(W) \right. \\
		&\left.~~~- (M\kappa_M)^{M-2} \kappa_{M-1} \Per(W) \frac{(M-1)T^{M(M-1)+1}}{(M+1)M^{M-2}}  + o\left(T^{M^2-1}\right) \right) \\
		&=M \binom{n}{M} \left( \kappa_M^{M-1} \Vol_M(W)T^{M(M-1)} \right.\\
		&\left. ~~~- \frac{M-1}{M+1} \kappa_{M-1} \kappa_M^{M-2} \Per(W) T^{M(M-1)+1} +o\left(T^{M^2-1}\right) \right)\\
		&=M \kappa_M^{M-1} \binom{n}{M} T^{M(M-1)} \Vol_M(W) \left(1+\mathcal{O}\left(T\right)\right)
		\end{align*}
		Thus,
		\[
		\E\left[N_T(\xi_n)\right]  \leq M \kappa_M^{M-1} \binom{n}{M} T^{M(M-1)} \Vol_M(W) (1+\mathcal{O(T)}),
		\]
		as $T\to 0$, indeed describes the asymptotic behavior of the expectation of the functional $N_T(\xi_n)$ in dependence of $T$ and $M$.
	\end{proof}	
	As already mentioned, for the proof of \Cref{t8} it is essential to make sure that on average the functional $N_T(\xi_n)$ can be asymptotically bounded from above by a positive constant. The choice of $T=n^{-1/(M-1)}$ gives
	\begin{equation}\label{e8}
	\E\left[N_T(\xi_n)\right]  \leq M \kappa_M^{M-1} \binom{n}{M} n^{-M} \Vol_M(W) (1+\mathcal{O(T)}),
	\end{equation} 
	as $T\to 0$.
	\begin{Remark}\label{rem1}
		Note that, since 
		\[
			\mathds{1} \left( \{x_1,\dots,x_M\} \subset \mathbb{B}^M_{T}\left(x_1\right) \right)=1 \implies \mathds{1} \left( \exists i \in \mathcal{M}: \{x_1,\dots,x_M\} \subset \mathbb{B}^M_{T}\left(x_i\right) \right)=1,
		\]
		one can show with with the same proof, that also
		\[
			\E\left[N_T(\xi_n)\right]  \geq \kappa_M^{M-1} \binom{n}{M} T^{M(M-1)} \Vol_M(W) (1+\mathcal{O(T)})
		\]
		holds, as $T\to 0$, so that the choice of $T=n^{-1/(M-1)}$ in fact even guarantees that $\E[ N_T(\xi_n) ]$ behaves asymptotically like a constant.
	\end{Remark}	
	From here on we use the following convention: for a set of $M$ fixed points $\{x_1,\dots,x_M\}$, we denote by $\zeta$ the set $\zeta=\xi_{n-M}\cup\{x_1,\dots,x_M\}$.
	\begin{Proposition}\label{t9}
		Let $W\subset \R^M$, $M\geq 2$, be a convex body and let $\rho>0$. If there exists an $i \in \mathcal{M}$ such that $\{x_1,\dots,x_M\} \subset \mathbb{B}^M_{n^{-1/(M-1)}}(x_i)$ and $x_1,\dots,x_M \in \rho \mathbb{B}^M$, then,
		\[ \E\left[ \deg(x_1,\dots,x_M;\zeta)^k \right] \geq n^k \left(\Vol_M(W)\, \frac{\rho^{M-1} M!}{2^{M-1}} \left( 1 -  \exp\left( -\frac{2^{M-1} \rho}{M! \Vol_M(W)} \right) \right) \right)^k \]
		for all $k\in \N$ and sufficiently large $n$.
	\end{Proposition}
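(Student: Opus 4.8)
The plan is to reduce the $k$-th moment bound to a linear lower bound on the first moment and then invoke convexity. Since $\deg(x_1,\dots,x_M;\zeta)\ge 0$ and $t\mapsto t^k$ is convex on $[0,\infty)$, Jensen's inequality gives $\E[\deg(x_1,\dots,x_M;\zeta)^k]\ge\left(\E[\deg(x_1,\dots,x_M;\zeta)]\right)^k$. Hence it suffices to establish the first-moment estimate
\[ \E[\deg(x_1,\dots,x_M;\zeta)]\ge n\,\Vol_M(W)\,\frac{\rho^{M-1}M!}{2^{M-1}}\left(1-\exp\left(-\frac{2^{M-1}\rho}{M!\,\Vol_M(W)}\right)\right) \]
for all sufficiently large $n$; raising both sides to the $k$-th power then yields the statement.

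To compute the first moment I would use exchangeability of the points of $\xi_{n-M}$ (equivalently, the binomial Mecke formula \eqref{e7} with a single marked apex). Writing $\deg(x_1,\dots,x_M;\zeta)=\sum_{y\in\xi_{n-M}}\mathds{1}(\conv(\{x_1,\dots,x_M,y\})\text{ is empty in }\zeta)$ and observing that the simplex with apex $y$ is empty exactly when none of the remaining $n-M-1$ random points lands in its interior, one obtains the exact identity
\[ \E[\deg(x_1,\dots,x_M;\zeta)] = \frac{n-M}{\Vol_M(W)}\int_W\left(1-\frac{\Vol_M(x_1,\dots,x_M,y)}{\Vol_M(W)}\right)^{n-M-1}\diff y, \]
using $\conv(\{x_1,\dots,x_M,y\})\subset W$ by convexity. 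The key structural point is that $\Vol_M(x_1,\dots,x_M,y)=\frac{1}{M}A\,h(y)$, where $A:=\mathcal{H}^{M-1}(\conv(\{x_1,\dots,x_M\}))$ is the $(M-1)$-volume of the fixed base and $h(y)$ is the distance of $y$ to the affine hull $H$ of $x_1,\dots,x_M$; in particular the integrand depends on $y$ only through $h(y)$.

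Next I would exploit that the base is tiny. As $\{x_1,\dots,x_M\}\subset\mathbb{B}^M_{n^{-1/(M-1)}}(x_i)$, all pairwise distances are at most $2n^{-1/(M-1)}$, so Hadamard's inequality applied to the $M-1$ edge vectors $x_2-x_1,\dots,x_M-x_1$ gives $A\le\frac{(2n^{-1/(M-1)})^{M-1}}{(M-1)!}=\frac{2^{M-1}}{(M-1)!\,n}$, whence $\Vol_M(x_1,\dots,x_M,y)\le\frac{2^{M-1}}{M!\,n}h(y)$. Substituting this into the integrand and restricting the integration to a cylindrical region $R\subseteq W$ with axis normal to $H$, heights $h(y)\in[0,\rho]$, and constant $(M-1)$-dimensional cross-section of measure $\Vol_M(W)\rho^{M-1}$ — a region that can be placed inside $W$ for the relevant range of $\rho$, since $W$ is a convex body with nonempty interior and the base points lie in $\rho\mathbb{B}^M$ — yields
\[ \E[\deg(x_1,\dots,x_M;\zeta)] \ge (n-M)\,\rho^{M-1}\int_0^{\rho}\left(1-\frac{2^{M-1}s}{M!\,n\,\Vol_M(W)}\right)^{n-M-1}\diff s. \]
As $n\to\infty$ the inner integrand tends to $\exp\left(-\tfrac{2^{M-1}s}{M!\,\Vol_M(W)}\right)$, and a uniform lower estimate for $(1-c/n)^{n-M-1}$ lets me pass the limit inside; since $\int_0^{\rho}\exp\left(-\tfrac{2^{M-1}s}{M!\,\Vol_M(W)}\right)\diff s=\tfrac{M!\,\Vol_M(W)}{2^{M-1}}\left(1-\exp(-\tfrac{2^{M-1}\rho}{M!\,\Vol_M(W)})\right)$, collecting the factors (and using $n-M\ge n(1-o(1))$) reproduces the target first-moment bound for $n$ large enough.

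The two steps carrying the real weight are the construction of the region $R$ and the passage to the limit. The geometric part — fitting a right cylinder over $H$ of bounded height and of the prescribed $(M-1)$-dimensional cross-sectional measure inside the convex body $W$, using only $x_1,\dots,x_M\in\rho\mathbb{B}^M$ and convexity — is where the hypotheses are genuinely consumed, and I expect it to be the main obstacle (in particular it forces $\rho$ into the regime where the cylinder actually fits). Once $R$ is available, the analytic estimate is routine, requiring only a careful uniform-in-$s$ lower bound on $(1-c/n)^{n-M-1}$ so that the stated constant emerges with no loss. Everything else — Jensen, the exchangeability computation, and the Hadamard bound on $A$ — is standard.
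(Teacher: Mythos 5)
Your proposal follows essentially the same route as the paper's proof: the exact first-moment identity \eqref{e12} obtained by exchangeability, Jensen's inequality to pass from the first to the $k$-th moment, the bound $\Vol_{M-1}(x_1,\dots,x_M)\leq 2^{M-1}/((M-1)!\,n)$ extracted from the ball hypothesis, and restriction of the $y$-integral to a fixed-size region sitting over the base (the paper takes a cube of side $\rho$ centred at $x_i$ where you take a cylinder; both constructions implicitly require the region to lie inside $W$, which is only arranged later in the application via $\rho=r/2$ with $r\mathbb{B}^M\subset W$). The one step you flag as delicate --- recovering the stated constant ``with no loss'' when replacing $(1-cs/n)^{n-M-1}$ by $e^{-cs}$ --- is a genuine soft spot: evaluating the integral exactly gives $\frac{n\rho^{M-1}}{c}\bigl(1-(1-c\rho/n)^{n-M}\bigr)$, which for small $c\rho$ falls short of the target by an additive $\mathcal{O}(1)$, and the paper's own final inequality quietly reverses direction at the same place; since only \emph{some} positive constant is needed downstream, this is immaterial, but neither argument delivers the displayed constant verbatim.
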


	\begin{proof}
		Let $\rho>0$ and let $x_1,\dots,x_M \in \rho \mathbb{B}^M$ be fixed vectors such that there exists an $i \in \mathcal{M}$ with $\{x_1,\dots,x_M\} \in \mathbb{B}^M_{n^{-1/(M-1)}}\left(x_i\right)$. Then,
		\begin{align*}
		&\E\left[ \deg(x_1,\dots,x_M;\zeta) \right] \\ 
		&= \E\left[ \sum_{z \in \xi_{n-M}} \mathds{1}\left(\zeta \cap \conv\{ x_1,\dots,x_m,z \} = \{ x_1,\dots,x_m,z \}\right) \right] \\
		&=(n-M)\Prob(\zeta^\prime \cap \conv\{ x_1,\dots,x_m,Y \} = \{ x_1,\dots,x_m,Y \} ) \mbox{,}
		\end{align*}
		where $\zeta^\prime=\xi_{n-M-1}\cup\{x_1,\dots,x_M,Y\}$ and $Y$ is a uniformly distributed random variable in W that is independent from $\xi_{n-M-1}$. This gives
		\begin{equation}\label{e12}
			\E\left[ \deg(x_1,\dots,x_M;\zeta) \right] = (n-M) \int_W \left(1-\frac{\Vol_{M}(x_1,\dots,x_M,y)}{\Vol_M(W)}\right)^{n-M-1}\, \diff y \mbox{.}
		\end{equation}
		
		Let $Q(x_1,\dots,x_M)$ be the $M$-dimensional cube with side length $\rho$, centered at $x_i \in \rho \mathbb{B}^M$, with one side parallel to the hyperplane spanned by $x_1,\dots,x_M$. Instead of integrating with respect to $y$ over $W$, the integration will be restricted to the cube $Q(x_1,\dots,x_M)$. Due to the fact that $x_1,\dots,x_M \in \mathbb{B}^M_{n^{-1/(M-1)}}\left(x_i\right)$, the pairwise distances between the points $x_1,\dots,x_M$ are less than $2n^{-1/(M-1)}$. Additionally, it holds that $\Vol_{M-1}(x_1,\dots,x_M)$ is smaller than the $(M-1)$-dimensional volume of the regular $M$-simplex with side length $2n^{-1/(M-1)}$, which can be estimated from above by $\frac{2^{M-1}}{(M-1)!}n^{-1}$. Hence, the estimate $\Vol_{M-1}(x_1,\dots,x_M) \leq \frac{2^{M-1}}{(M-1)!}n^{-1}$ can be used to get the lower bound
		\begin{align*}
		&\int \limits_W \left(1-\frac{\Vol_{M}(x_1,\dots,x_M,y)}{\Vol_M(W)}\right)^{n-M-1}\, \diff y \\ 
		&\geq \int \limits_{Q(x_1,\dots,x_M)} \left(1-\frac{\Vol_{M}(x_1,\dots,x_M,y)}{\Vol_M(W)}\right)^{n-M-1}\, \diff y \\ 
		&\geq \int \limits_{0}^{\rho}\dots\int \limits_{0}^{\rho} \left(1-\frac{\Vol_{M-1}(x_1,\dots,x_M)}{M \Vol_M(W)}\,y_M\right)^{n-M-1}\, \diff y_M \dots \diff y_1 \\ 
		&= \rho^{M-1} \int \limits_{0}^{\rho} \left(1-\frac{\Vol_{M-1}(x_1,\dots,x_M)}{M \Vol_M(W)}\,y_M\right)^{n-M-1}\, \diff y_M \\ 
		&= \rho^{M-1}\, \frac{M \Vol_M(W)}{\Vol_{M-1}(x_1,\dots,x_M)} \int \limits_{0}^{\frac{\Vol_{M-1}(x_1,\dots,x_M)\rho}{M\Vol_M(W)}} \left(1-t\right)^{n-M-1}\, \diff t \\ 
		&=  \frac{\rho^{M-1} M \Vol_M(W)}{\Vol_{M-1}(x_1,\dots,x_M)} \frac{1}{n-M} \left( 1 -  \left(1-\frac{\Vol_{M-1}(x_1,\dots,x_M) \rho}{M \Vol_M(W)}\right)^{n-M} \right) \\ 
		& \geq \Vol_M(W)\,\frac{\rho^{M-1} n M!}{2^{M-1} (n-M)} \left( 1 -  \left(1-\frac{2^{M-1} \rho}{n M! \Vol_M(W)}\right)^{n-M} \right) \\ 
		&\geq \Vol_M(W)\,\frac{\rho^{M-1} n M!}{2^{M-1} (n-M)} \left( 1 -  \exp\left( -\frac{(n-M) 2^{M-1} \rho}{n M! \Vol_M(W)} \right) \right) \\ 
		& \geq \Vol_M(W)\,\frac{\rho^{M-1} n M!}{2^{M-1} (n-M)} \left( 1 -  \exp\left( -\frac{2^{M-1} \rho}{M! \Vol_M(W)} \right) \right)
		\end{align*}
		for large enough $n$. Combining this result with
		\[ \E\left[ \deg(x_1,\dots,x_M;\zeta)^k \right] \geq (n-M)^k \left( \int_W \left(1-\frac{\Vol_{M}(x_1,\dots,x_M,y)}{\Vol_M(W)}\right)^{n-M-1}\, \diff y \right)^k \mbox{,} \]
		which follows from \eqref{e12} by applying Jensen's inequality, finishes the proof.
	\end{proof}
	
	\begin{proof}[Proof of \Cref{t8} (i)]
		Clearly, the inequality
		\begin{equation}\label{e1}
		F^{(k)}_T(\xi_n) \leq N_T(\xi_n)\deg (\xi_n)^k 
		\end{equation}
		holds for the set $\xi_n$, and, by $N_T(\xi_n)=N_T(\xi_n)\,\mathds{1}(N_T(\xi_n)>0) + N_T(\xi_n)\,\mathds{1}(N_T(\xi_n)=0)$, also
		\begin{equation}\label{e2}
		\deg(\xi_n)^k \geq \frac{F^{(k)}_T(\xi_n)}{N_T(\xi_n)}\,\mathds{1}(N_T(\xi_n)>0)
		\end{equation}
		holds.
		
		Since $\deg\xi_n$ is invariant under non-degenerate affine transformations we can apply such a transformation to $W$ and $\E[\deg\xi_n]$ will not change. By John's Theorem \cite[Thm.~10.12.2]{S}, there exists an ellipsoid $E$ such that $E \subset W \subset M E$. First, we apply the affine transformation so that the area of $W$ becomes equal to one, making the Lebesgue measure coincide with the probability measure defining $\xi_n$. Second, we apply a volume preserving affine transformation that carries $E$ to  $r \mathbb{B}$ and consequently $ME$ to $Mr\mathbb{B}$. From now on, let $W$ be in this position and assume that $r\mathbb{B} \subset W \subset Mr\mathbb{B}$ holds.
		
		Hence, for a random set $\xi_n$ of $n$ points chosen uniformly and independently from $W$, it holds that
		\begin{equation}\label{e3}
		\E \left[ \deg (\xi_n)^k \right] \geq \E \left[ \frac{F^{(k)}_T(\xi_n)}{N_T(\xi_n)}\,\mathds{1}(N_T(\xi_n)>0) \right] \mbox{.}
		\end{equation}
		
		Intuitively, one would expect an $M$-element subset of points $\{x_1,\dots,x_M\}\subset\xi_n$ to be of highest degree, if it is the $M$-element subset where the points are the closest to each other. As a notion of closeness the existence of a point $x_i\subset\{x_1,\dots,x_M\}$, $i \in \mathcal{M}$, for which all the remaining points lie in a ball of certain radius, centered at $x_i$, is used, as can be deduced from the definition of the functionals $N_T(\xi_n)$ and $F_T(\xi_n)$.
		
		Since $N_T(\xi_n)$ counts the number of $M$-element subsets that satisfy the above mentioned closeness for a radius $T$, it has to be made sure that this dependence on $T$ is chosen correctly. Due to \Cref{t10}, the choice of $T=n^{-1/(M-1)}$ makes sure that $\E[N_T(\xi_n)]$ asymptotically behaves like a positive constant and allows the determination of an upper bound for $\E \left[ \deg \xi_n \right]$.

		Equation \eqref{e3} can be broken down further, for all $K>0$, into
		\begin{equation}\label{e9}
		\E\left[\deg (\xi_n)^k\right] \geq \frac{1}{K} \E \left[ F^{(k)}_T(\xi_n) \mathds{1}\left(0< N_T(\xi_n) \leq K \right) \right] \mbox{.}
		\end{equation}
		Now, by exploiting the definition of $F_T^{(k)}$, the linearity of the expectation and the independence of the points of $\xi_n$, the right-hand side of \eqref{e9} gives
		\begin{equation}\label{e10}
		\begin{split}
		&\frac{1}{K} \E \left[ F^{(k)}_T(\xi_n) \mathds{1}\left(0< N_T(\xi_n) \leq K \right) \right] \\
		&=\frac{\binom{n}{M}}{K} \E \left[ \mathds{1} \left( \exists i \in \mathcal{M}: \{x_1,\dots,x_M\} \subset \mathbb{B}^M_{T}\left(x_i\right) \right) \deg(x_1,\dots,x_M;\xi_n)^k\, \mathds{1}\left(0< N_T(\xi_n) \leq K \right) \right]  \\ 
		&= \frac{\binom{n}{M}}{K} \int \limits_{W} \dots \int \limits_{W} \mathds{1} \left( \exists i \in \mathcal{M}: \{x_1,\dots,x_M\} \subset \mathbb{B}^M_{T}\left(x_i\right) \right)\\ 
		&~~~\times\E\left[ \deg(x_1,\dots,x_M;\zeta)^k \mathds{1}\left(0< N_T(\zeta) \leq K \right) \right] \diff x_1 \dots \diff x_M \mbox{.}
		\end{split}
		\end{equation}
		From the elementary equality $\E[X \mathds{1}(A)] = \E[X] - \E[X \mathds{1}(A^c)] $, which holds for any random variable $X$ and any event $A$, \eqref{e10} can be bounded in the following way:
		\begin{equation}\label{e11}
		\begin{split}
		&\E\left[ \deg(x_1,\dots,x_M;\zeta)^k \mathds{1}\left(0< N_T(\zeta) \leq K \right) \right] \\
		&=\E\left[ \deg(x_1,\dots,x_M;\zeta)^k \right] - \E\left[ \deg(x_1,\dots,x_M;\zeta)^k \mathds{1}\left( N_T(\zeta) \in \{0\} \cup (K,\infty) \right) \right] \\
		&\geq \E\left[ \deg(x_1,\dots,x_M;\zeta)^k \right] - n^k\E\left[ \mathds{1}\left( N_T(\zeta) \in \{0\} \cup (K,\infty) \right) \right] \\
		&= \E\left[ \deg(x_1,\dots,x_M;\zeta)^k \right] - n^k\Prob\left( N_T(\zeta) \in \{0\} \cup (K,\infty) \right)\\
		&=\E\left[ \deg(x_1,\dots,x_M;\zeta)^k \right] - n^k\Prob\left( N_T(\zeta) > K \right) - n^k\Prob\left( N_T(\zeta) = 0 \right).
		\end{split}
		\end{equation}
		
		Let $\rho = r/2$. Then, the ball $\rho \mathbb{B}^M$ is contained in $r \mathbb{B}^M \subset W$ and every point of $\rho \mathbb{B}^M$ is farther than $\rho = r/2$ away from the boundary of $W$. Then,
		\begin{align*}
		N_T(\zeta) &\leq N_T(\xi_{n-M}) + \sum_{i=1}^{M} |\xi_{n-M} \cap \mathbb{B}^M_{T}(x_i)| + 1 \\ 
		& \leq N_T(\xi_{n-M}) + M N_{2T}(\xi_{n-M}) + 1 \\ 
		& \leq (M+1) N_{2T}(\xi_{n-M}) + 1 \\
		& \leq (M+1) N_{2T}(\xi_n) + 1 \mbox{,}
		\end{align*}
		where the fact was used that, if $M$ points lie in $\mathbb{B}^M_T(x_i)$, then their pairwise distance is at most $2T$. This implies, with $K_n=2 (M+1) \ln n$, that
		\[ \Prob(N_T(\zeta) \geq K_n) \leq \Prob((M+1)N_{2T}(\xi_n)+1 \geq K_n) \leq \Prob(N_{2T}(\xi_n)\geq  \ln n) \mbox{.} \]
		Setting $T=n^{-1/(M-1)}$, this yields, together with Markov's inequality and \Cref{t10}, that
		\[ \Prob(N_T(\xi_n) \geq K_n) \leq \Prob(N_{2T}(\xi_n)\geq  \ln n) \leq \frac{\E\left[ N_{2T}(\xi_n)  \right]}{\ln n} \leq \frac{c}{\ln n} \mbox{,}  \]
		for some constant $c>0$. 
		
		Furthermore, one sees that $\mathds{1} \left( \exists i \in \mathcal{M}: \{x_1,\dots,x_M\} \subset \mathbb{B}^M_{T}\left(x_i\right) \right) = 1$ implies that $N_T(\zeta)\geq 1$, and subsequently that $\Prob(N_T(\zeta)=0)=0$. Plugging these two results, as well as the statement of \Cref{t9}, into \Cref{e10}, one obtains
		\begin{align*}
		\frac{1}{K_n}& \E \left[ F^{(k)}_T(\xi_n) \mathds{1}\left( 0 <N_T(\xi_n) \leq K_n \right) \right] \\ 
		&\qquad\qquad\geq \frac{\binom{n}{M}}{2(M+1)\ln n} \left[ n^k \left(\frac{\rho^{M-1} M!}{2^{M-1}} \left( 1 -  \exp\left( -\frac{2^{M-1} \rho}{M!} \right) \right)  \right)^k- c \frac{n^k}{\ln n}  \right] \\ 
		&\qquad\qquad\quad \times \int \limits_{(\rho \mathbb{B}^M)^M} \mathds{1} \left( \exists i \in \mathcal{M}: \{x_1,\dots,x_M\} \subset \mathbb{B}^M_{T}\left(x_i\right) \right) \diff x_1 \dots \diff x_M \mbox{.}
		\end{align*}
		The value of the integral is $c/n^M$, for some constant $c>0$. Therefore, one concludes that
		\[ \E\left[ \deg \left( \xi_n \right)^k \right]  \geq c ~\frac{n^k}{\ln n} \mbox{,} \]
		for $n$ large enough, with some constant $c>0$.
		
	\end{proof}
	
	\begin{proof}[Proof of \Cref{t8} (ii)]
		This part of the proof barely differs from the proof for the degree in $2$-dimensional case as elaborated in \cite{BMR}, so that only the minor differences will be pointed out. Namely, instead of introducing a grid with mesh width $1/\sqrt{n}$ in the plane, one has to use a grid with mesh width $1/\sqrt[M]{n}$ in $\R^M$, and instead of considering squares one has to consider $M$-dimensional cubes. The proof then follows exactly as in the $2$-dimensional case, except for the obvious fact that one has to use $n-M$ for every appearance of $n-2$.
	\end{proof}	
		
	\begin{Remark}
		Upon careful examination of the proofs of \Cref{t10} and \Cref{t9} one sees that the result of \Cref{t8} (i) can be extended further. Namely, both propositions hold, with different constants, if one chooses a distribution over $W$ which has a density that can be bounded from above and below by positive constants. One only has to incorporate the density of this distribution into the integrals in \eqref{e7} and \eqref{e12}, respectively. The upper bound of the density can then be used to bound the integral in \eqref{e7} from above, whereas the lower bound of the density gives a bound from below for the integral in \eqref{e12}.
	\end{Remark}
		
	\subsection*{Acknowledgments}
		
	The author would like to thank Christoph Th\"ale and Julian Grote for helpful discussion concerning the topics of this paper. Furthermore, the author expresses his gratitude towards the referees for their suggestions regarding improvements of the paper.

\end{document}